\documentclass[12pt]{article}
\usepackage[utf8]{inputenc}
\usepackage[style=numeric]{biblatex}
\addbibresource{hawkes.bib}
\usepackage{eurosym}
\usepackage{amstext,amsthm}
\usepackage{amsmath}
\usepackage{amssymb,mathtools}
\usepackage[nointegrals]{wasysym}
\usepackage{mathrsfs}
\usepackage{hyperref}
\usepackage{graphicx}
\graphicspath{ {images/} }
\usepackage{subfig}
\usepackage{comment}
\usepackage{tikz}

\newtheorem{proposition}{Proposition}[section]
\newtheorem{corollary}[proposition]{Corollary}
\newtheorem{theorem}{Theorem}
\newtheorem{lemma}[proposition]{Lemma}
\theoremstyle{definition}
\newtheorem{definition}[proposition]{Definition}
\newtheorem{remark}[proposition]{Remark}
\newtheorem{assumption}[proposition]{Assumption}

\pagestyle{headings}

\setlength{\textheight}{230mm}
\setlength{\topmargin}{-15mm} 
\setlength{\textwidth}{17cm} 
\setlength{\oddsidemargin}{-3mm}
\setlength{\columnseprule}{.1pt}
\setlength{\columnsep}{20pt}





\usepackage[normalem]{ulem}
\usepackage{color}

\DeclareMathAlphabet{\mathpzc}{OT1}{pzc}{m}{it}



\numberwithin{equation}{section}

\newcommand\unnumberedfootnote[1]{ %
	\let\temp=\thefootnote %
	\renewcommand{\thefootnote}{}%
	\footnote{#1}
	\let\thefootnote=\temp%
	\addtocounter{footnote}{-1}}

\begin{document}
	\title{\LARGE Mean-field limits for non-linear Hawkes processes with inhibition on a Erd\H{o}s-R\'{e}nyi-graph}
	
	\author{{\sc by  J. Stiefel} \\[2ex]
		\emph{Albert-Ludwigs University Freiburg} } \date{\today}
	
	\maketitle
	
	\unnumberedfootnote{\emph{AMS 2000 subject classification.} {\tt 60G55}
		(Primary) {\tt, 60F05} (Secondary).}
	
	\unnumberedfootnote{\emph{Keywords and phrases.} Multivariate Hawkes process; Volterra equation; spike train; Erd\H{o}s-R\'{e}nyi-graph}
	
	\begin{abstract}
		\noindent
		We study a multivariate, non-linear Hawkes process $Z^N$ on a $q$-Erd\H{o}s-R\'{e}nyi-graph with $N$ nodes. Each vertex is either excitatory (probability $p$) or inhibitory (probability $1-p$). If $p\neq\tfrac12$, we take the mean-field limit of $Z^N$, leading to a multivariate point process $\bar Z$. We rescale the interaction intensity by $N$ and find that the limit intensity process solves a deterministic convolution equation and all components of $\bar Z$ are independent. The fluctuations around the mean field limit converge to the solution of a stochastic convolution equation. In the critical case, $p=\tfrac12$, we rescale by $N^{1/2}$ and discuss difficulties, both heuristically and numerically. 
	\end{abstract}
	
	\section{Introduction}
In \cite{Hawkes1971a, HawkesOakes1974}, Hawkes processes were introduced as self-excitatory point processes. Today, they are used in various fields of applications including seismology \cite{Ogata1988, Fox2016}, interactions in social networks \cite{Zipkin2016, lukasik-etal-2016-hawkes}, finance \cite{Bacry2015, Hawkes2018} and neuroscience \cite{pmid28234899, pmid25974542}. In the classical univariate, linear Hawkes process, the firing rate at time $t$ is a linear	function of $I_t := \sum_i \varphi(t-T_i)$, where the $T_i$'s are
previous jump times.  In this case, since rates cannot become negative, $\varphi\geq 0$ is required, leading to a self-excitatory process. 

Our main motivation to study Hawkes processes comes from the neurosciences. In a graph, vertices model neurons, whereas the (directed) edges are synapses linking the neurons. A point process indexed by the vertices models action potentials or spike trains of electrical impulses. Communication via synapses leads to correlated point processes such that each spike in one neuron influences the rate by which a neighboring vertex fires. From this point of view, the firing rate should include two important features. First, it is known that neurons cannot only excite others, but inhibition is another important factor (see e.g.\ \cite{Kandel-2012-Book}). This is why $\varphi \leq 0$ can occur as well, and consequently the firing rate at time $t$ has to be a non-linear function of $I_t$. Second, as not all neurons are connected, the firing rate at some vertex should depend only on the spikes of connected vertices. The Erd\H{o}s-R\'{e}nyi-model \cite{erdHos1960evolution} is one of the first and simplest models for random graphs, where the existence of edges between any two vertices is indicated by independent Bernoulli random variables with common probability $q$. In \cite{pfaffelhuber2021meanfield}, mean-field limits for the multivariate, non-linear Hawkes process with excitation and inhibition on a complete graph are derived. The main goal of this paper is to generalize these limit results to a $q$-Erd\H{o}s-R\'{e}nyi-Graph graph. 

Nonlinear multivariate Hawkes processes have been studied to some extent in the past decades, a summary can be found in the introduction of \cite{pfaffelhuber2021meanfield}. In a standard mean-field setting, all components of the Hawkes process share the same firing rate, see e.g. \cite{delattre2016hawkes}. But even if models include features leading to different rates at different vertices/neurons, the methods may be adapted to derive mean-field results, see e.g. \cite{DitlevsenLoecherbach2017} for a multi-class setting. Other works extend the standard firing rate of Hawkes processes by an age-dependence, i.e. the rate at some vertex depends on the time since the last spike at this vertex. It is still possible to derive mean-field limits \cite{Chevallier2017a}, central limit theorems \cite{Chevallier2017b}, even in the critical case, where excitation and inhibition are balanced \cite{erny2020conditional}.

~

In the present paper, we extend the model from \cite{pfaffelhuber2021meanfield} by a parameter $q$, which denotes the fraction of open connections/edges between neurons/vertices. We assume that each vertex/neuron is either excitatory or inhibitory, i.e.\ excites or inhibits all of its connected neighbors. We will denote by $p$ the fraction of excitatory vertices/neurons and distinguish the critical case $p=1/2$ from the non-critical one. For the latter, we obtain in Theorem~\ref{T1} a classical mean-field result, i.e.\, by rescaling the interaction intensity by $N$, a deterministic limit of the intensity (Theorem \ref{T1}.1) and independent point processes driven by this intensity (Theorem \ref{T1}.2) arises. We also provide a central limit result for the intensity (Theorem \ref{T1}.3). In the critical case, the methods from \cite{pfaffelhuber2021meanfield} cannot be applied. We describe the difficulties and present simulation results to visualize features of possible mean-field limits.

~

In many fields of applications, the requirement of a common connection probability in the Erd\H{o}s-R\'{e}nyi model is too stringent, as edges or vertices may have heterogeneous attributes. Some examples of Hawkes processes on more complex graphs deal with estimation of the model parameters, \cite{mei2017neural}, \cite{passino2021mutually}, \cite{verma2021self}, or perform simulations, \cite{zhou2013learning}, both for a fixed size of the graph. In \cite{agathenerine2022multivariate} and \cite{agathenerine2022longterm}, the author derives results for Hawkes processes on imhomogenuous random graphs in the mean-field setting, i.e. when the size of the graph tends to infinity, and studies the large time behaviour of the limit system. After allowing for inhibition (as described in section~6 in \cite{agathenerine2022multivariate}), these works generalize Theorem~\ref{T1}.1 in the present paper. 

~

In this work we focus on the Erd\H{o}s-R\'{e}nyi model, as it allows to derive rigorous mathematical results in the mean-field setting including a central limit theorem. More precisely, we show that the fluctuation around the mean field limit can be divided up into two parts: A common fluctuation, which is present at any vertex, plus a vertex-specific part, which is independent over the vertices and independent of the common fluctuation (see equation~\eqref{eq:flucLim}). Of course, the generalisation of these results to more realistic, and consequently more complex graph models is an interesting topic for future research.

\section{Model and assumptions}
We use the following general model for a non-linear Hawkes process:

\begin{definition}[Multi-variate, non-linear Hawkes process]
	\label{def:hawkes}
	Let $\mathbb G = (\mathbb V, \mathbb E)$ be some finite, directed graph, and write $ji \in \mathbb E$ if $j\to i$ is an edge in $\mathbb G$. 
	Consider a family of measurable, real-valued functions
	$(\varphi_{ji})_{ji \in \mathbb E}$ and a family of real-valued, non-negative functions
	$(h_i)_{i\in \mathbb V}$.  Then, a  point process $Z = (Z^i)_{i\in\mathbb V}$ (with state space $\mathbb N_0^{\mathbb V}$) is a multi-variate
	non-linear Hawkes process with interaction kernels $(\varphi_{ji})_{ji \in \mathbb E}$ and transfer functions $(h_i)_{i\in\mathbb V}$, if $Z^i, Z^j$ do not jump simultaneously for $i\neq j$, almost surely, and the compensator of $Z^i$ has the form $(\int_0^t \lambda_s^i ds)_{t\geq 0}$ with
	$$ \lambda_t^i := \lambda_t^i(Z_{s<t})
	:= h_i\Big(\sum_{j: ji \in \mathbb E} \int_0^{t-}
	\varphi_{ji}(t-s)dZ_s^j\Big), \qquad i\in\mathbb V.$$ 
\end{definition}

\noindent
We need some mimimal conditions such that the multivariate, non-linear Hawkes process is well-defined (i.e.\ exists). As mentioned in \cite{delattre2016hawkes}, Remark~5, the law of the non-linear Hawkes process is well-defined, provided that the following assumption holds. 

\begin{assumption}\label{ass:basic}
	All interaction kernels $\varphi_{ji}, ji \in \mathbb E$ are locally integrable, and all transfer functions $(h_i)_{i\in\mathbb V}$ are Lipschitz continuous. 
\end{assumption}

\begin{remark}[Interpretation and initial condition]
	\begin{enumerate}
		\item   If $dZ_s^j = 1$, we call $\varphi_{ji}(t-s) dZ_s^j$ the influence of the point at time $s$ in vertex $j$ on vertex $i$.
		\item Consider the case of monotonically increasing  transfer functions. If $\varphi_{ji} \leq 0$, we then say that vertex $j$ inhibits $i$, since any point in $Z^j$ decreases the jump rate of $Z^i$. Otherwise, if $\varphi_{ji}\geq 0$, we say that $j$ excites $i$.
		\item   In our formulation, we have $Z_0^i = 0, i\in\mathbb V$,     with the consequence that the $dZ_u^j$-integral in \eqref{eq:Z_ref} could also be extended to $-\infty$ without any     change. We note that it would also be possible to use some     initial condition, i.e.\ some (fixed) $(Z_t^i)_{i\in \mathbb     V, t\leq 0}$, and extend the integral to the negative reals. 
	\end{enumerate}
\end{remark}

~

\noindent
Let us now come to the mean-field model, where we fix some basic assumptions. Note that we will show convergence for large graphs, i.e.\ all processes come with a scaling parameter $N$, which determines the size of the graph.

\begin{assumption}[Mean-field setting]\label{ass1}
	Let
	\begin{enumerate}
		\item $\mathbb G_N = (\mathbb V_N, \mathbb E_N)$ be the $q$-Erd\H{o}s-R\'{e}nyi graph on $N$ vertices, i.e.\ $\mathbb V_N = \{1,...,N\}$, and for independent Bernoulli random variables $(V_{ji})_{j,i\in\mathbb V_N}$ with parameter $q\in[0,1]$, $ji \in\mathbb E_N$ if and only if $V_{ji}=1$;
		\item $h_i = h$ for all $i\in\mathbb V_N$ where $h \geq 0$ is
		bounded, $h$ and $\sqrt{h}$ are Lipschitz with constant $h_{Lip}$;
		\item $\varphi_{ji} = \theta_N U_j \varphi$ for all
		$j,i \in \mathbb V_N$, where $\theta_N \in \mathbb R$ and 
		\begin{itemize}
			\item $U_1, U_2,...$ are iid with
			$\mathbb P(U_1 = 1) = 1-\mathbb P(U_1 = -1) = p$,
			\item $\varphi \in \mathcal C_b^1([0,\infty))$, the set of bounded continuously differentiable functions with bounded derivative.
		\end{itemize}
	\end{enumerate}
\end{assumption}

\noindent
The form of $\varphi_{ji}$ implies that node~$j$ is exciting all other
nodes with probability $p$, and inhibiting all other nodes with
probability $1-p$. Additionally $V_{ji}$ indicates if there actually is a connection from node $j$ to node $i$. Assumption \ref{ass1} leads to the intensity
$$\lambda_t^i = h\Big(\theta_N\sum_{j=1}^N \int_0^{t-} U_jV_{ji}\varphi(t-s)dZ_s^j\Big)$$
at node $i\in\mathbb V_N$.

\section{Results on the mean-field model}
Our main goal is to give a limit result on the family $Z^{N,i}$, the multivariate, non-linear Hawkes process on the graph $\mathbb G_N$ with interaction kernels and transfer functions as given in Assumption~\ref{ass1}. 

\subsection{The non-critical case}
In the case $p\neq \tfrac 12$ the limit compensator is given by $(\int_0^t h(I_s)ds)_{t\geq 0}$, where $I$ is the weak limit of $I^{N,i}$ (see \eqref{eq:IN} and \eqref{eq:IE}). We have that $(I_t)_{t\geq 0}$ follows a linear, deterministic convolution equation, and all components of the limit of $Z^{N,i}$ are independent (Theorem~\ref{T1}.2). The fluctuation around the limit converges to a stochastic convolution equation, and the correlation of the limiting fluctuations at different vertices depends on the connectivity $q$ of the graph (Theorem~\ref{T1}.3). Below, we denote by $\Rightarrow$ the weak convergence in $\mathcal D_{\mathbb R^n}([0,\infty))$, the space of cadlag paths, which is equipped with the Skorohod topology; see e.g.\ Chapter~3 in \cite{EthierKurtz86}. The proof of the following result can be found in Section~\ref{ss:proofT1}.

\begin{theorem}[Mean-field limit of multi-variate non-linear Hawkes processes, $p\neq \tfrac 12$] \label{T1} Let Assumption~\ref{ass1} hold with $p\neq \tfrac12$ and $\theta_N = \tfrac 1N$. Let $Z^N = (Z^{N,1}, ..., Z^{N,N})$ be the multivariate, non-linear Hawkes process from Definition~\ref{def:hawkes}, and
	\begin{align}\label{eq:IN}
	I^{N,i}_t := \frac 1N \sum_{j=1}^N\int_0^{t-} U_jV_{ji} \varphi(t-s) dZ_s^{N,j}.
	\end{align}
	\begin{enumerate}
		\item Then, $I^{N,i} \xrightarrow{N\to\infty} I$ almost surely, uniformly on compact time intervals and uniformly in $i\leq N$, where $I = (I_t)_{t\geq 0}$ is the unique
		solution of the integral equation
		\begin{align}
		\label{eq:IE}
		I_t = (2p-1)q \int_0^t \varphi(t-s) h(I_s)ds.
		\end{align}
		\item For all $n=1,2,...$,
		$(Z^{N,1},...,Z^{N,n}) \xRightarrow{N\to\infty} (\bar Z^1,...,\bar
		Z^n),$ where $\bar Z^1, ..., \bar Z^n$ are independent and $\bar Z^i$ is a simple point process with intensity at time $t$
		given by $h(I_t)$, $i=1,...,n$. It is possible to build
		$\bar Z^1, ..., \bar Z^n$, such that the convergence is almost surely (in Skorohod-distance).
		\item Assume that $h \in \mathcal C^1(\mathbb R)$ and that $h'$ is bounded and Lipschitz. Define $K^{N,k} = \sqrt{N} (I^{N,k} - I)$, the fluctuation around the limit at vertex $k$ as well as the mean fluctuation, $\overline K \,^N := \frac 1N \sum_{i=1}^N K^{N,i}$. Then, for all $n=1,2,...$,
		\begin{align*}
		\Big(\overline K^{N},K^{N,1},...,K^{N,n}\Big)\xRightarrow{N\to\infty} \Big(\overline K, K^{1},..., K^{n}\Big)
		\end{align*}
		where $\overline K_t=\int_0^t\varphi(t-s)d\overline G_s$, $K^k_t=\int_0^t\varphi(t-s)dG^k_s$ and
		\begin{equation}\label{eq:flucLim}
		\begin{aligned}
		\overline G_t &= q\Big(\int_0^t W h(I_s) + (2p-1)h'(I_s)\overline{K}_s ds + \int_0^t \sqrt{h(I_s)} dB_s\Big), \\
		G^{k}_t &= \overline{G}_t +  \sqrt{q(1-q)}\Big(\int_0^t \widetilde W^{k}h(I_s) ds + \int_0^t \sqrt{h(I_s)} d\widetilde B^k_s\Big).
		\end{aligned}
		\end{equation}
		Here, $B, \widetilde B^1,..., \widetilde B^n$ are independent Brownian motions, and $W, \widetilde W^1,..., \widetilde W^n$ are independent normally distributed random variables, $W\sim N(0,4p(1-p))$, $\widetilde W^i\sim N(0,1)$ for each $i=1,...,n$.
	\end{enumerate}
\end{theorem}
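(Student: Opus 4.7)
My plan is to prove the three parts in order, since each relies on its predecessor, following the overall strategy of the complete-graph case in \cite{pfaffelhuber2021meanfield} but accounting for the extra Bernoulli randomness coming from the Erd\H{o}s-R\'enyi edges. For Part~1, existence and uniqueness of $I$ on any $[0,T]$ follow from a Banach fixed-point argument applied to $J\mapsto(2p-1)q\int_0^{\cdot}\varphi(\cdot-s)h(J_s)\,ds$ on $\mathcal C([0,T])$; boundedness of $h$ and local integrability of $\varphi$ make this routine. Introducing the compensated martingales $M^{N,j}_t:=Z^{N,j}_t-\int_0^t h(I^{N,j}_s)\,ds$, I decompose
\begin{align*}
I^{N,i}_t-I_t
&=\frac{1}{N}\sum_{j=1}^N\bigl(U_jV_{ji}-(2p-1)q\bigr)\int_0^t\varphi(t-s)h(I_s)\,ds\\
&\quad+\frac{1}{N}\sum_{j=1}^N U_jV_{ji}\int_0^t\varphi(t-s)\bigl(h(I^{N,j}_s)-h(I_s)\bigr)ds\\
&\quad+\frac{1}{N}\sum_{j=1}^N U_jV_{ji}\int_0^{t-}\varphi(t-s)\,dM^{N,j}_s.
\end{align*}
The first summand is dominated uniformly in $i\leq N$ by Hoeffding combined with a union bound; the martingale term has predictable quadratic variation of order $1/N$ because $h$ is bounded, so Doob's $L^2$ inequality plus a union bound handles it at the same rate; the middle term is bounded by $\|\varphi\|_\infty h_{Lip}\int_0^t\sup_{j\leq N}|I^{N,j}_s-I_s|\,ds$. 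Taking $\sup_i$ and invoking Gronwall yields $L^2$ convergence, which Borel--Cantelli promotes to almost-sure uniform convergence on compact time intervals.

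For Part~2, I use the Poisson thinning representation $Z^{N,i}_t=\int_0^t\int_0^\infty\ind{z\leq\lambda^{N,i}_s}\pi^i(ds,dz)$ with iid Poisson random measures $\pi^i$ of intensity $ds\,dz$, and define $\bar Z^i$ via the same $\pi^i$ but with the deterministic intensity $h(I_s)$. Because $I$ is deterministic, the $\bar Z^i$ are automatically independent. The estimate $\mathbb E|Z^{N,i}_t-\bar Z^i_t|\leq h_{Lip}\int_0^t\mathbb E|I^{N,i}_s-I_s|\,ds$ combined with Part~1 delivers almost-sure convergence in Skorohod distance on compacts.

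Part~3 is the crux. Writing $K^{N,i}_t:=\sqrt N(I^{N,i}_t-I_t)$ and Taylor-expanding $h(I^{N,j}_s)=h(I_s)+h'(I_s)(I^{N,j}_s-I_s)+r^{N,j}_s$ with $|r^{N,j}_s|$ quadratic in $|I^{N,j}_s-I_s|$, the identity from Part~1 multiplied by $\sqrt N$ becomes
\begin{align*}
K^{N,i}_t&=\int_0^t\varphi(t-s)h(I_s)\,ds\cdot\frac{1}{\sqrt N}\sum_{j=1}^N\bigl(U_jV_{ji}-(2p-1)q\bigr)\\
&\quad+\int_0^t\varphi(t-s)h'(I_s)\cdot\frac{1}{N}\sum_{j=1}^N U_jV_{ji}K^{N,j}_s\,ds+\mathcal M^{N,i}_t+o_{\mathbb P}(1),
\end{align*}
where $\mathcal M^{N,i}_t:=\frac{1}{\sqrt N}\sum_{j=1}^N U_jV_{ji}\int_0^{t-}\varphi(t-s)\,dM^{N,j}_s$, and Part~1 makes the $o_{\mathbb P}(1)$ error uniform on compacts. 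The algebraic identity $U_jV_{ji}-(2p-1)q=q(U_j-(2p-1))+(2p-1)(V_{ji}-q)+(U_j-(2p-1))(V_{ji}-q)$ splits the first sum into a vertex-independent summand whose $N^{-1/2}$-rescaling converges by CLT to $qW$ with $W\sim N(0,4p(1-p))$, and two vertex-specific summands whose total limiting variance equals $q(1-q)$; direct pairwise covariance computations show that the vertex-specific limits are independent across $i=1,\ldots,n$ and independent of $W$, producing the $\sqrt{q(1-q)}\widetilde W^k$ contribution in \eqref{eq:flucLim}. The feedback $\frac{1}{N}\sum_j U_jV_{ji}K^{N,j}_s$ is replaced by $(2p-1)q\bar K^N_s$ using Part~1 together with a tightness estimate for the $K^{N,j}$. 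For the martingale contribution, writing $\mathcal M^{N,i}_t=\int_0^t\varphi(t-s)\,dN^{N,i}_s$ with $N^{N,i}_s:=\frac{1}{\sqrt N}\sum_j U_jV_{ji}M^{N,j}_s$, one checks that $\langle N^{N,i}\rangle_t\to q\int_0^t h(I_s)\,ds$ and $\langle N^{N,i},N^{N,k}\rangle_t\to q^2\int_0^t h(I_s)\,ds$ for $i\neq k$; a martingale CLT (e.g.\ Theorem~7.1.4 in \cite{EthierKurtz86}) then produces the driving Brownian pieces $qB$ (common to all vertices) and $\sqrt{q(1-q)}\widetilde B^k$ (orthogonal across $k$). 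To finish, I prove tightness of $(\bar K^N,K^{N,1},\ldots,K^{N,n})$ in $\mathcal D_{\mathbb R^{n+1}}([0,\infty))$ via Aldous' criterion, using the $\mathcal C_b^1$-regularity of $\varphi$ and uniform $L^2$ bounds on the noise terms, and identify every subsequential limit as a solution of the linear stochastic Volterra system \eqref{eq:flucLim}, whose uniqueness follows by Gronwall on the $L^2$ norm. The principal obstacle is exactly this joint identification: the four Gaussian sources $(W,\widetilde W^k,B,\widetilde B^k)$ arise simultaneously from LLN fluctuations of $(U_j,V_{ji})$ and from the scaled compensated martingales, so their mutual independence requires careful book-keeping of every cross-covariance, and the $\bar K$-feedback forces a Gronwall-type argument to upgrade finite-dimensional noise convergence to path-wise convergence of the $K^{N,k}$.
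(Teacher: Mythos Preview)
Your overall architecture---the three-term decomposition in Part~1, the coupling for Part~2, and the Taylor expansion plus martingale CLT in Part~3---is the same as the paper's. Two steps, however, do not go through as written.

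\textbf{Part 1, uniformity in $i$.} For the martingale term, Doob's $L^2$ inequality gives $\mathbb E\bigl[\sup_{s\le t}\bigl|\tfrac1N\sum_j U_jV_{ji}M^{N,j}_s\bigr|^2\bigr]=O(1/N)$ for each fixed $i$, but a union bound over $N$ vertices leaves you with $O(1)$, not $o(1)$; and even an honest $O(1/N)$ bound on the second moment of $\sup_{i,s}|\cdot|$ would not be summable in $N$, so Borel--Cantelli does not apply. The paper fixes this by passing to sixth moments (Lemma~\ref{l:poi2}): the sixth centred moment of a compensated Poisson sum with total rate $\le N\|h\|$ is $O(N^3)$, giving $O(N^{-3})$ per vertex and $O(N^{-2})$ after the union bound, which \emph{is} summable. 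The same sixth-moment device (Lemma~\ref{l:LLN}) handles the weight term uniformly in $i$.

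\textbf{Part 3, the feedback term.} Replacing $\tfrac1N\sum_j U_jV_{ji}K^{N,j}_s$ by $(2p-1)q\,\overline K^{N}_s$ for a \emph{fixed} receiving vertex $i$ amounts to showing
\[
\frac1N\sum_{j=1}^N\bigl(U_jV_{ji}-(2p-1)q\bigr)K^{N,j}_s\longrightarrow 0,
\]
and neither ``Part~1'' nor ``a tightness estimate'' yields this: the $K^{N,j}$ are $O_{\mathbb P}(1)$, correlated across $j$, and not independent of $(U_j,V_{ji})$, so Cauchy--Schwarz gives a non-vanishing bound and no LLN is available. The paper's key manoeuvre is to \emph{average over the receiving vertex first}. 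In Step~3.1 it proves convergence of the $U$-weighted mean $\tfrac1N\sum_i U_iK^{N,i}$, because after that averaging the feedback becomes $\tfrac1N\sum_j U_jK^{N,j}\bigl(\tfrac1N\sum_i U_iV_{ji}\bigr)$ and the inner average over $i$ converges to $(2p-1)q$ \emph{uniformly in $j$} by Lemma~\ref{l:LLN}; the Gronwall loop then closes on a single scalar process, with the residual controlled by the a-priori bound $\tfrac1N\sum_j\mathbb E_{UV}\bigl[(K^{N,j}_s)^2\bigr]=O_{\mathbb P}(1)$ of \eqref{eq:apri}. Only after the weighted averages are known (Step~3.2 adds the $U_\cdot V_{\cdot k}$-weighted one) does the paper return to individual $K^{N,k}$ in Step~3.3, where the feedback is now a \emph{known} convergent quantity rather than part of a fixed-point argument. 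Your direct route must either reproduce this hierarchy or supply a separate argument for the display above; as stated it does neither.

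A minor remark: your three-term algebraic split of $U_jV_{ji}-(2p-1)q$ is correct and gives the right total variance $q(1-q)$, but the paper's two-term split $q(U_j-(2p-1))+U_j(V_{ji}-q)$ is tidier and produces $W^N,\widetilde W^{N,i}$ directly; the analogous martingale split $X^{N,i}=\widetilde X^{N,i}+qX^{N,U}$ in Lemma~\ref{l:donsker} isolates the common and vertex-specific Brownian pieces without having to compute the off-diagonal brackets $\langle N^{N,i},N^{N,k}\rangle$.
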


\begin{remark}
	This generalizes \cite[Theorem 1]{pfaffelhuber2021meanfield}. There it holds that $q=1$ and the intensities at different vertices $I^{N,i}$ are the same. Obviously 1. and 2. coincide with 1. and 2. from \cite[Theorem 1]{pfaffelhuber2021meanfield}. In 3., observe that the fluctuations around $I$ are the same at different vertices $k$, $K^{N,k}=\overline K^N$, for each $k\in\mathbb V$. In \eqref{eq:flucLim} we have $q(1-q)=0$, whence, for any $k\in\mathbb V$, 
	\begin{align*}
	G^k_t =\overline G_t= \int_0^t  Wh(I_s) + (2p-1)h'(I_s)K^k_s ds + \int_0^t \sqrt{h(I_s)} dB_s.
	\end{align*}
	This is equation (3.3) in \cite{pfaffelhuber2021meanfield}.
\end{remark}

\begin{remark}
	The Brownian motions $B, B^1,..., B^n$ arise as limit of a rescaled sum over compensated point processes, while the normal distributions $W, \widetilde W^1,..., \widetilde W^n$ arise as limit of rescaled sums over the synaptic weights $(U_j)_j, (V_{ji})_{ji}$. See sections \ref{ss:reformulation} and \ref{ss:weights} for details.
\end{remark}

\begin{remark}
	The form of \eqref{eq:IE} tells us that $I$ follows a linear
	Volterra convolution equation, \cite{berger1980volterra}. Turning into a
	differential equation, we write, using Fubini,
	\begin{align*}
	\frac{dI}{dt} & = \frac{d}{dt}(2p-1)q \Big(\int_0^t \int_s^t \varphi'(r-s) h(I_s) dr ds + \int_0^t \varphi(0) h(I_s)ds\Big)
	\\ & 
	= (2p-1)q \Big(\int_0^t \varphi'(t-s) h(I_s) ds + \varphi(0) h(I_t) \Big).
	\end{align*}
	In particular, the special choice of $\varphi(s) = e^{-\lambda s}$ gives
	\begin{align*}
	\frac{dI}{dt} & = -\lambda (2p-1)q \Big(\int_0^t \varphi(t-s) h(I_s) ds + h(I_t) \Big) = -\lambda I_t + (2p-1)qh(I_t)),
	\end{align*}
	i.e.\ $I$ follows some ordinary differential equation in this case.
\end{remark}

\noindent
While Theorem~\ref{T1} is concerned with convergence of the limit intensity of the multivariate, non-linear Hawkes process, we are also in the situation to study convergence of the average intensity of $Z^{N,i}$. The proof of the next corollary is found in Section~\ref{ss:proofT1}.

\begin{corollary}\label{cor1}
	Let $Z^N = (Z^{N,1},...,Z^{N,N})$ be as in Theorem~\ref{T1}, and $\bar Z = (\bar Z^1, \bar Z^2,...)$ be as in Theorem~\ref{T1}.2. Then, 
	\begin{align}\label{eq:cor11}
	\frac 1N \sum_{j=1}^N \Big(Z^{N,j} - \int_0^. h(I^{N,j}_s)ds\Big) \xrightarrow{N\to\infty} 0
	\intertext{and}\label{eq:cor12}
	\frac 1N \sum_{j=1}^N \Big(Z^{N,j} - \bar Z^j\Big) \xrightarrow{N\to\infty} 0
	\end{align}
	in probability, uniformly on compact intervals. Moreover, 
	\begin{align}\label{eq:cor13}
	\frac{1}{\sqrt N} \sum_{j=1}^N \Big(Z^{N,j} - \int_0^. h(I^{N,j}_s)ds\Big) \xRightarrow{N\to\infty} \int_0^. \sqrt{h(I_s)}dB^0_s
	\end{align}
	for some Brownian motion $B^0$, and  (writing $h(I) := (h(I_t))_{t\geq 0}$)
	\begin{align}\label{eq:cor14}
	\sqrt{N} \big( h(I^{N,i}) - h(I)\big) \xRightarrow{N\to\infty} h'(I) K^i.
	\end{align}
	Let $B^U, B^0$ be correlated Brownian motions with $\mathbf E[B^U_t B^0_t] = (2p-1)t, t\geq 0$. Use $B^U$ in the definition of $\overline K$ in Theorem~\ref{T1}.3. Then, 
	\begin{align}\label{eq:cor15}
	\frac{1}{\sqrt N} \sum_{j=1}^N \Big(Z^{N,j} - \int_0^. h(I_s)ds\Big) \xRightarrow{N\to\infty} \int_0^. h'(I_s)\overline K_s ds + \int_0^. \sqrt{h(I_s)}dB^0_s.
	\end{align}
\end{corollary}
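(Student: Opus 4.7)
The plan is to exploit the compensated Hawkes martingales $M^{N,j} := Z^{N,j} - \int_0^{\cdot} h(I^{N,j}_s)\,ds$. Since $Z^{N,j}$ and $Z^{N,k}$ never jump simultaneously for $j\neq k$, the $M^{N,j}$'s are orthogonal with predictable quadratic variation bounded by $\|h\|_\infty t$. For \eqref{eq:cor11}, this yields $\mathbf E[(\tfrac{1}{N}\sum_j M^{N,j}_t)^2] \leq \|h\|_\infty t/N \to 0$, and Doob's $L^2$ inequality upgrades this to uniform convergence on compacts. For \eqref{eq:cor12}, I use the decomposition
\begin{equation*}
\frac{1}{N}\sum_j (Z^{N,j} - \bar Z^j) = \frac{1}{N}\sum_j M^{N,j} + \frac{1}{N}\sum_j \int_0^{\cdot} (h(I^{N,j}_s) - h(I_s))\,ds - \frac{1}{N}\sum_j \Big(\bar Z^j - \int_0^{\cdot} h(I_s)\,ds\Big).
\end{equation*}
The first piece vanishes by \eqref{eq:cor11}; the second by the uniform convergence $I^{N,j}\to I$ from Theorem~\ref{T1}.1 combined with Lipschitz continuity of $h$; the third by the same orthogonal-martingale argument applied to the iid mean-zero martingales $\bar Z^j - \int h(I)$ (independent because the $\bar Z^j$'s are, and with deterministic $h(I)$ since Theorem~\ref{T1}.1 gives $I$ deterministic).

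For \eqref{eq:cor13}, set $\tilde M^N := \frac{1}{\sqrt N}\sum_j M^{N,j}$. This is a cadlag martingale with jumps of size $N^{-1/2}$ and predictable quadratic variation $\frac{1}{N}\sum_j \int_0^t h(I^{N,j}_s)\,ds$, which converges uniformly on compacts to $\int_0^t h(I_s)\,ds$ by Theorem~\ref{T1}.1 and continuity of $h$. The martingale functional CLT then identifies $\tilde M^N \Rightarrow \int_0^{\cdot}\sqrt{h(I_s)}\,dB^0_s$ for a standard Brownian motion $B^0$. For \eqref{eq:cor14}, a Taylor expansion with $h'$ Lipschitz yields $h(I^{N,k}_t) - h(I_t) = h'(I_t)(I^{N,k}_t - I_t) + R^{N,k}_t$ with $|R^{N,k}_t| \leq C(I^{N,k}_t - I_t)^2$, so $\sqrt N\, R^{N,k}_t \leq C|K^{N,k}_t|^2/\sqrt N \to 0$ in probability by tightness of $K^{N,k}$ (Theorem~\ref{T1}.3). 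Continuous mapping then gives $\sqrt N(h(I^{N,k}) - h(I)) \Rightarrow h'(I)K^k$.

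Finally, \eqref{eq:cor15} follows from the decomposition
\begin{equation*}
\frac{1}{\sqrt N}\sum_j \Big(Z^{N,j}_t - \int_0^t h(I_s)\,ds\Big) = \tilde M^N_t + \int_0^t \frac{1}{\sqrt N}\sum_j\big(h(I^{N,j}_s) - h(I_s)\big)\,ds,
\end{equation*}
where the same Taylor argument as in \eqref{eq:cor14}, together with the identity $\frac{1}{\sqrt N}\sum_j(I^{N,j}_s - I_s) = \overline K^N_s$, rewrites the second summand as $\int_0^t h'(I_s)\overline K^N_s\,ds + o(1)$. The main obstacle is to establish joint convergence of $(\tilde M^N, \overline K^N)$ with the correct coupling between the driving Brownian motions. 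Writing $\overline K^N_t = \int_0^t \varphi(t-s)\,d\overline G^N_s$, the martingale part of $\overline G^N$ equals $\frac{1}{\sqrt N}\sum_j U_j \overline V_j M^{N,j}_t$ with $\overline V_j := \frac{1}{N}\sum_i V_{ji}$, whose predictable cross-variation with $\tilde M^N$ is $\frac{1}{N}\sum_j U_j\overline V_j\int_0^t h(I^{N,j}_s)\,ds \to q(2p-1)\int_0^t h(I_s)\,ds$ (using $\overline V_j \to q$ and $\mathbf E[U_j] = 2p-1$). Matching this against $\langle q\!\int\!\sqrt{h(I)}\,dB^U,\;\int\!\sqrt{h(I)}\,dB^0\rangle_t = q\rho\int_0^t h(I_s)\,ds$ forces $\rho = 2p-1$, which is precisely the stipulated coupling $\mathbf E[B^U_t B^0_t] = (2p-1)t$. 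A joint multivariate martingale CLT, combined with the almost-sure uniform convergence from Theorem~\ref{T1}.1 and the fluctuation result Theorem~\ref{T1}.3, then yields \eqref{eq:cor15}.
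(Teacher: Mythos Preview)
Your proposal is correct and follows essentially the same route as the paper. The paper packages the martingale CLT and the joint $(2p-1)$-correlation structure in a preparatory lemma (its Lemma~\ref{l:donsker}, applied to the processes $X^{N,0}$ and $X^{N,U}$) and then simply invokes it, whereas you redo the predictable quadratic-variation computations inline; in particular your cross-variation calculation for \eqref{eq:cor15} recovers exactly the identity $d\langle M^U,M^0\rangle_t=(2p-1)h(I_t)\,dt$ that the paper proves there.
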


\begin{remark}[Correlation between $B^0$ and $B$]
	Let us briefly discuss the correlated Brownian motions appearing in \eqref{eq:cor15}. Clearly, the left hand side of \eqref{eq:cor15} can be built from the left hand sides of \eqref{eq:cor13} and \eqref{eq:cor14} by taking the mean value over the vertices $i$ in \eqref{eq:cor14}. The limits  $\int_0^. \sqrt{h(I_s)}d\widetilde B^0_s$ and $\overline K$ appearing on the right hand sides of \eqref{eq:cor13} and \eqref{eq:cor14} are weak limits of sums of compensated point processes. While in \eqref{eq:cor13}, we sum over all point processes in the system, $I^{N,i}$ in \eqref{eq:cor14} distinguishes between nodes with different signs $U_j$. Hence, the correlation is positive for the proportion $p$ of point processes with positive sign, and negative for the proportion $1-p$ of point processes with negative sign, summing to $p - (1-p) = 2p-1$. For more details, see Lemma \ref{l:donsker}.
\end{remark}

\subsection{The critical case}
For $p=\frac 12$, excitation and inhibition are balanced. If we rescale the interaction kernels with $\theta_N=\frac 1N$ as in Theorem~\ref{T1}, we can read off 
$$\frac 1N \sum_{j=1}^N\int_0^{t-} U_jV_{ji} \varphi(t-s) dZ_s^{j,N} \to 0$$
from Theorem~\ref{T1}.1, and the limiting point processes $\bar Z^i$ have constant intensity $h(0)$ (Theorem~\ref{T1}.2). In this critical case is natural to upscale by $\sqrt{N}$ in order to obtain non-trivial limits. This has been done under assumption \ref{ass1}, but with $q=1$, in \cite[Theorem 2]{pfaffelhuber2021meanfield}, and for a similar model in \cite{erny2020conditional}, \cite{erny2020mean}. After upscaling, the intensity at vertex $i$ and time $t$ is given by $h(I^{N,i}_t)$, where 
\begin{align*}
I^{N,i}_t = \frac 1{\sqrt{N}} \sum_{j=1}^N\int_0^{t-} U_jV_{ji} \varphi(t-s) dZ_s^{N,j}. 
\end{align*}
In order to obtain limit results we compensate $dZ_t^{N,j}$ by $h(I^{N,j})_tdt$ and apply a martingale central limit theorem. More precisely
\begin{equation}\label{eq:critical}
\begin{aligned}
I^{N,i}_t &= \frac 1{\sqrt{N}} \sum_{j=1}^N U_jV_{ji} \Big(\int_0^{t-} \varphi(t-s) dZ_s^{N,j}-\int_0^{t} \varphi(t-s)h(I^{N,j}_s) ds\Big)
\\ &+\frac 1{\sqrt{N}} \sum_{j=1}^N U_jV_{ji}\int_0^{t} \varphi(t-s)h(I^{N,j}_s) ds.
\end{aligned} 
\end{equation}
The predictable quadratic covariation of the first line is given by $\frac 1N \sum_{j=1}^NV_{ji}\int_0^{t} \varphi(t-s)h(I^{N,j}_s) ds$, and we a-priori need a limit result on this covariation as well as the second line in \eqref{eq:critical} to derive a limit of $I^{N,i}$. 
This is feasible in \cite[Theorem 2]{pfaffelhuber2021meanfield}, where $q=1$ and therefore $V_{ji}=1$, i.e. the intensities $I^{N,i}$ at different vertices $i$ coincide. In contrast, \cite{erny2020conditional} derive a mean-field limit for Hawkes processes with different intensities at different vertices. A significant difference to our model is that the influence of vertex $j$ on all other vertices, $U_j$, is not fixed over time, i.e. whenever $dZ_t^{N,j}=1$, $U_j(t)$ are centered random variables, independent over the jump times $t$. Consequently, the process \[\frac 1{\sqrt{N}} \sum_{j=1}^N \int_0^{t-}U_j(s)dZ^{N,j}_s \quad\text{is a martingale with covariation}\quad \frac {\sigma^2}N \sum_{j=1}^N\int_0^{t} h(I^{N,j}_s) ds,\]where $\sigma^2$ is the second moment of $U_j$. It suffices to a-priori derive a result on the limit of the empirical distributions of intensities at different vertices, where the exchangeability of the system can be used, to derive a limit of $I^{N,i}$. Compared to this approach, we face two major difficulties in our model: First, in the covariation of the compensated process we have to deal with the empirical distribution over the subset of vertices connected to vertex $i$. Here, one could first try to focus on the average over different vertices $i$ to replace $V_{ji}$ by its mean $q$. Second, and more challenging, we would need an a-priori result on the compensator $\frac 1{\sqrt{N}} \sum_{j=1}^N U_jV_{ji}dh(I^{N,j}_t)$, i.e. an a-priori CLT-type result for the intensities at different vertices. \\
\textbf{Simulation}\\
We may simulate a multivariate Hawkes-process on a finite, but large graph using Lewis' thinning algorithm \cite{lewis1979simulation}, \cite{ogata1981lewis}. Assume the graph consists of $N=500$ neurons and choose the parameters $p=q=0.5$. For simplicity, choose an exponential interaction kernel, $\varphi(t)=e^{-\lambda t}$ for some $\lambda>0$, and transfer function $h(x)=1+\tfrac 2\pi \arctan(x)$. Note that $\varphi$ and $h$ satisfy assumption \ref{ass1}. As the interaction kernel is exponential, the intensity at vertex is given by 
\begin{align*}
I^{N,i}_t &= \frac 1{\sqrt{N}} \sum_{j=1}^N U_jV_{ji} Z_{t-}^{N,j} - \lambda \int_0^t I^{N,i}_s ds
\\ &= \frac 1{\sqrt{N}} \sum_{j=1}^N U_jV_{ji} \Big(Z_{t-}^{N,j} - \int_0^{t} h(I^{N,j}_s) ds\Big) + \frac 1{\sqrt{N}} \sum_{j=1}^N U_jV_{ji}\int_0^{t}h(I^{N,j}_s) ds - \lambda \int_0^t I^{N,i}_s ds.
\end{align*}
We split up the martingale part,
\begin{align*}
M^i &:= \frac 1{\sqrt{N}} \sum_{j=1}^N U_jV_{ji} \Big(Z_{t-}^{N,j} - \int_0^{t} h(I^{N,j}_s) ds\Big)
\\ &= \frac 1{\sqrt{N}} \sum_{j=1}^N U_j(V_{ji}-q) \Big(Z_{t-}^{N,j} - \int_0^{t} h(I^{N,j}_s) ds\Big) + q\frac 1{\sqrt{N}} \sum_{j=1}^N U_j\Big(Z_{t-}^{N,j} - \int_0^{t} h(I^{N,j}_s) ds\Big) 
\\ &=: \widetilde M^i + qM.
\end{align*}
As the jump size of the martingale part tends to zero, the convergence is determined by the predictable covariation process. Our simulations suggest that
\begin{align}\label{eq:covappr}
\langle \widetilde M^k,\widetilde M^l \rangle &= \frac 1N \sum_{j=1}^N (V_{jk}-q)(V_{jl}-q)h(I^{N,j}) \approx \frac 1N \sum_{j=1}^N (V_{jk}-q)(V_{jl}-q)\overline{h(I)} \approx 1_{k=l}q(1-q)\overline{h(I)}, \\
\langle \widetilde M^k,M \rangle &= \frac 1N \sum_{j=1}^N (V_{jk}-q)h(I^{N,j}) \approx \frac 1N \sum_{j=1}^N (V_{jk}-q)\overline{h(I)} \approx 0, \notag \\
\langle M,M \rangle &\approx \overline{h(I)}, \notag
\end{align}
where $\overline{h(I)}=\frac 1N\sum_{j=1}^Nh(I^{N,j})$. 
After an application of a standard martingale central limit theorem, this would imply that
\begin{align}\label{eq:simulation}
\widetilde M^i_t &\approx \sqrt{q(1-q)}\int_0^t\sqrt{\overline{h(I_s)}}dB^i_s, \quad\text{ as well as }\quad M \approx \int_0^t\sqrt{\overline{h(I_s)}}dB_s
\end{align}
where $B, B^1,...,B^N$ are independent Brownian motions and $\overline{h(I)}=\frac 1N\sum_{j=1}^Nh(I^{N,j})$. This should be compared to Theorem~\ref{T1}.3 and lemma~\ref{l:donsker}, where we obtain a similar convergence result. There, note that $\overline{h(I)}\approx h(I)$.

\noindent
\begin{figure}[h]
	\centering
	\subfloat[Drift\label{fig:drift}]{\includegraphics[width=0.45\columnwidth]{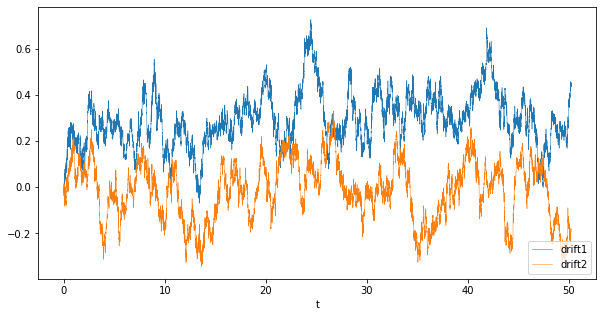}}
	\qquad
	\subfloat[Martingale part\label{fig:marti}]{\includegraphics[width=0.45\columnwidth]{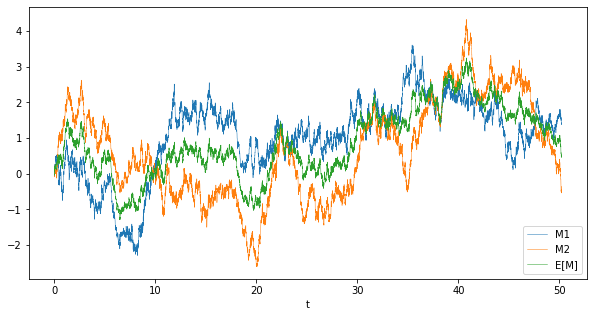}}
	\caption{Drift and martingale part of the intensity at vertex 1, 2, respectively.}
\end{figure}
As described above, the drift $\frac 1{\sqrt{N}} \sum_{j=1}^N U_jV_{ji}\int_0^{t}h(I^{N,j}_s) ds$ is more difficult to analyse, even numerically. We expect that the value of the drift depends on the configuration of the graph in some way. To see how it depends on the rescaled input of the graph to node $i$, $W^{N,i} = \frac 1{\sqrt{N}} \sum_{j=1}^N U_jV_{ji}$, manually set 
\[\sum_{j=1}^N V_{j1} = \sum_{j=1}^N V_{j2} = \frac N2, \sum_{j=1}^N V_{j1}V_{j2}=0,\]
i.e. the vertices $1, 2$ receive input from complementary parts of the graph. Then choose $U_1,...,U_N=\pm 1$, such that $\sum_j U_j\approx 0$ and $W^{N,1}=W^{N,2}$, i.e. excitation and inhibition are balanced and the mean input to node 0 and 1 are the same. As precicted above, the difference of the martingale parts $M^1$ and $M^2$, respectively, from the mean martingale part are negatively correlated , see figure \ref{fig:marti}, as $\frac 1N \sum_{j=1}^N (V_{j1}-q)(V_{j2}-q)=-q(1-q)$.

But the drifts at vertices 1 and 2 significantly differ from each other, figure \ref{fig:drift}. We expect the drift to depend on higher powers of the adjacency matrix $V=(V_{ij})_{j,i=1,...,N}$, together with multiplication with $U=(U_i)_{i=1,...,N}$ and applications of the necessarily non-linear transfer function $h$. A precise analysis of this dependence in the mean-field setting $N\to\infty$ is very complex.   

\section{Proofs}
We start off in Subsection~\ref{ss:conv} with a result on convolution equations. We proceed in~\ref{ss:reformulation} with a reformulation of the multivariate linear Hawkes process using a time-change equation, and in~\ref{ss:weights} with results on different types of mean values of the synaptic weights. Then, we prove Theorem~\ref{T1} in Subsection~\ref{ss:proofT1}.

\subsection{Convolution lemma}
\label{ss:conv}\noindent
In order to bound the value of a convolution equation by its integrator we need the following 
\begin{lemma}\label{l:conv}
	Let $J$ be the sum of an It\^{o} process with bounded coefficients and a c\`{a}dl\`{a}g pure-jump process, and let $\varphi \in \mathcal C_b^1([0,\infty))$. Then
	\begin{align}
	\label{eq:conv}
	\sup_{0\leq s\leq t}\Big(\int_0^s \varphi(s-r)dJ_r\Big)^2\leq C(\varphi,t) \cdot \sup_{0\leq s\leq t} J_s^2.
	\end{align}
\end{lemma}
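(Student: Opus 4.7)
The plan is to remove the $dJ_r$ by deterministic integration by parts and then bound the resulting expression pathwise by $\sup_{r\le t} J_r^2$. Since $\varphi\in\mathcal C^1_b$, the integrand $r\mapsto\varphi(s-r)$ is a deterministic, $C^1$, bounded-variation function, and $J$ is a semimartingale (Itô process plus pure-jump càdlàg process), so the classical integration-by-parts formula applies without any Itô correction term between them (the quadratic covariation of $\varphi(s-\cdot)$ and $J$ vanishes because $\varphi(s-\cdot)$ has finite variation). Concretely, I would fix $s\le t$ and write
\begin{equation*}
\int_0^s \varphi(s-r)\,dJ_r \;=\; \varphi(0)\,J_s - \varphi(s)\,J_0 + \int_0^s \varphi'(s-r)\,J_r\,dr,
\end{equation*}
using that $\tfrac{d}{dr}\varphi(s-r)=-\varphi'(s-r)$.

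From here the argument is elementary. Using $(a+b+c)^2\le 3(a^2+b^2+c^2)$ (or just $(a+b)^2\le 2a^2+2b^2$ after noting $J_0=0$ in the setup of Definition~\ref{def:hawkes}), and then Cauchy--Schwarz on the $dr$-integral,
\begin{equation*}
\Bigl(\int_0^s \varphi(s-r)\,dJ_r\Bigr)^{2} \;\le\; 2\,\|\varphi\|_\infty^{2}\,J_s^{2} + 2\,\Bigl(\int_0^s |\varphi'(s-r)||J_r|\,dr\Bigr)^{2} \;\le\; 2\|\varphi\|_\infty^{2}\,J_s^{2} + 2\,t\,\|\varphi'\|_\infty^{2}\!\int_0^s J_r^{2}\,dr,
\end{equation*}
and the right-hand side is bounded by $\bigl(2\|\varphi\|_\infty^{2}+2t^{2}\|\varphi'\|_\infty^{2}\bigr)\sup_{r\le t}J_r^{2}$. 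Since the bound does not depend on $s\in[0,t]$, taking $\sup_{0\le s\le t}$ on the left yields \eqref{eq:conv} with $C(\varphi,t)=2\|\varphi\|_\infty^{2}+2t^{2}\|\varphi'\|_\infty^{2}$.

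The only point that needs a moment's care is the justification of the integration-by-parts identity when $J$ has a jump part: because $r\mapsto \varphi(s-r)$ is continuous and of finite variation, the Itô product rule reduces to $d(\varphi(s-r)J_r)=-\varphi'(s-r)J_r\,dr+\varphi(s-r)\,dJ_r$ with no bracket term, and the jumps of $J$ are absorbed cleanly into the $dJ$-integral. This is the main (mild) obstacle; the rest is deterministic bookkeeping. No assumption on the sign or integrability of $\varphi$ is needed beyond $\varphi\in\mathcal C^1_b$, and boundedness of the coefficients of the Itô part of $J$ is used only to ensure that $J$ is a genuine semimartingale so that the stochastic integral on the left of \eqref{eq:conv} is defined.
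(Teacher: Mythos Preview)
Your proof is correct and is essentially the same argument as the paper's: both transform $\int_0^s \varphi(s-r)\,dJ_r$ into $\varphi(0)J_s+\int_0^s\varphi'(s-u)J_u\,du$ (after taking $J_0=0$) and then bound pathwise. The only cosmetic difference is packaging---you phrase this as the semimartingale integration-by-parts formula $d(\varphi(s-r)J_r)=-\varphi'(s-r)J_r\,dr+\varphi(s-r)\,dJ_r$, while the paper writes $\varphi(s-r)=\varphi(0)+\int_r^s\varphi'(s-u)\,du$ and invokes stochastic Fubini to swap the order of integration; these are the same identity.
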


\begin{proof}
	Wlog, we have $J_0=0$. By \cite[Theorem 4.A]{berger1980volterra} and Fubini's theorem for Lebesgue-integrals we can apply the Stochastic Fubini Theorem to $J$, hence
	\begin{align*}
	\sup_{0\leq s\leq t}\Big(\int_0^s \varphi(s-r)
	dJ_r\Big)^2 & \notag = \sup_{0\leq s\leq t}\Big(\int_0^s \Big( \varphi(0) + \int_r^s \varphi'(s-u) du \Big) dJ_r\Big)^2 \\
	&  \notag\leq 2\,\Big(||\varphi|| \cdot \sup_{0\leq s\leq t}
	J_s^2  + \sup_{0\leq s\leq
		t}\Big(\int_0^s \int_0^{u} \varphi'(s-u) dJ_r
	du\Big)^2\Big) \\ & \notag= 2\,\Big(||\varphi|| \cdot \sup_{0\leq s\leq t} J_r^2 + \sup_{0\leq s\leq t}\Big(\int_0^s
	\varphi'(s-u) J_u du\Big)^2\Big)
	\\ & \leq 2\,\big(||\varphi||
	+ t^2||\varphi'||^2\big) \cdot \sup_{0\leq s\leq t} J_s^2,
	\end{align*}
	where $||.||$ denotes the supremum norm.
\end{proof}
\noindent

\subsection{Reformulation of Hawkes processes}
\label{ss:reformulation}
Alternative descriptions of non-linear Hawkes processes have been given in the literature. Above all, the construction using a Poisson random measures is widely used; see e.g.\ Proposition~3 in \cite{delattre2016hawkes}. Here, we rely on the following construction using time-change equations (see e.g.\ Chapter~6 of \cite{EthierKurtz86}), which we give here without proof. 

\begin{lemma}\label{l:reform}
	Let $\mathbb G = (\mathbb V, \mathbb E)$, $(\varphi_{ji})_{ji\in \mathbb E}$ and $(h_i)_{i\in\mathbb V}$ be as in Definition~\ref{def:hawkes}, and let Assumption \ref{ass:basic} hold. A point process $Z = (Z^i)_{i\in\mathbb V}$ is a multivariate, non-linear Hawkes process with with interaction kernels $(\varphi_{ji})_{ji\in \mathbb E}$ and transfer functions $(h_i)_{i\in\mathbb V}$, if and only if it is the weak solution of the time-change equations
	\begin{align}\label{eq:Z_ref}
	Z_t^i = Y_i\Big( \int_0^t \lambda_s^i ds\Big) = Y_i\Big( \int_0^t h_i\Big(\sum_{j: ji \in \mathbb E} \int_0^{s-}
	\varphi_{ji}(s-u)dZ_u^j\Big) ds \Big),
	\end{align}
	where $(Y_i)_{i\in \mathbb V}$ is a family of independent unit rate Poisson processes. 
\end{lemma}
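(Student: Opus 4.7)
The plan is to establish both implications through the random time-change theorem for simple counting processes, in the spirit of Chapter~6 of \cite{EthierKurtz86}.

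For the ``only if'' direction, I would start with a Hawkes process $Z=(Z^i)_{i\in\mathbb V}$ whose $i$-th component has compensator $A^i_t:=\int_0^t\lambda^i_s\,ds$ with $\lambda^i$ as in Definition~\ref{def:hawkes}. Under Assumption~\ref{ass:basic}, each $A^i$ is almost surely continuous, non-decreasing, and finite on compact intervals, so its right-continuous inverse $\sigma^i(u):=\inf\{t\geq 0 : A^i_t>u\}$ is well defined on $[0,A^i_\infty)$. Setting $Y_i(u):=Z^i(\sigma^i(u))$, the standard time-change theorem applied to the compensated martingale $Z^i-A^i$ makes each $Y_i$ a unit rate Poisson process. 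The delicate additional step is to argue that the $Y_i$ are \emph{independent}: the hypothesis that $Z^i$ and $Z^j$ have no common jumps for $i\neq j$ forces the predictable quadratic covariation of $Z^i-A^i$ and $Z^j-A^j$ to vanish, which by Watanabe's characterization of Poisson processes lifts to independence of the time-changed processes $Y_i$.

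For the ``if'' direction, I would assume $Z$ is constructed through \eqref{eq:Z_ref} with independent unit rate Poisson processes $Y_i$, and apply the time-change in reverse: since $A^i_t$ is a continuous, non-decreasing, adapted process, $Z^i(t)=Y_i(A^i_t)$ has compensator $A^i_t$, which is precisely the integrated intensity prescribed in Definition~\ref{def:hawkes}. Absence of simultaneous jumps across $i\neq j$ is automatic because the $Y_i$ are independent Poisson processes and the $A^i$ are continuous, so the probability of $Y_i$ and $Y_j$ jumping at matching time-changed arguments is zero.

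The main obstacle is not the equivalence itself, which in both directions reduces to an application of the time-change theorem, but the tacit existence of a process satisfying the fixed-point equation \eqref{eq:Z_ref}: the right-hand side depends on the full past of $Z$ through the non-linearity $h_i$ and the convolution with $\varphi_{ji}$. To handle this I would run a Picard iteration on~\eqref{eq:Z_ref}, using the Lipschitz bound on $h_i$ and the local integrability of $\varphi_{ji}$ from Assumption~\ref{ass:basic} to bound $\mathbf E[Z^i_t]$ uniformly in the iteration on any finite interval, and then apply a tightness argument in $\mathcal D_{\mathbb R^{|\mathbb V|}}([0,\infty))$ to extract a weak limit, with uniqueness in law following from a Gronwall-type estimate on the time-changed formulation; this is the route taken in \cite{delattre2016hawkes}, which is presumably why the paper cites the result without proof.
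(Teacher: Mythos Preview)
The paper does not prove this lemma: it is stated explicitly ``without proof'' with a reference to Chapter~6 of \cite{EthierKurtz86}, so there is no in-paper argument to compare against. Your sketch is exactly the standard random time-change/Watanabe characterization that reference supplies, and it is correct as far as it goes. Two technical points you skate over are (i) when $A^i_\infty<\infty$ the process $Y_i$ is only defined on $[0,A^i_\infty)$ and must be extended by an independent unit-rate Poisson process on an enlarged space, which is unproblematic since only a \emph{weak} solution is asserted, and (ii) in the ``if'' direction one has to be explicit about the filtration---typically $\mathcal F_t=\sigma\big(Y_i(s):s\le A^i_t,\ i\in\mathbb V\big)$---so that $A^i$ is adapted and the martingale property of $Y_i(A^i_\cdot)-A^i_\cdot$ follows from optional stopping applied to the independent Poisson drivers. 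Your final paragraph on existence is slightly misplaced: the paper treats well-posedness separately via \cite{delattre2016hawkes} (Remark~5), so Lemma~\ref{l:reform} is purely a statement about equivalence of two characterizations, not about constructing a solution.
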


\noindent
Under Assumption~\ref{ass1}, the time-change equations from this lemma read, with independent unit rate Poisson processes $Y_1,...,Y_N$,
\begin{align}
Z_t^{N,i} & = Y_i\Big(\int_0^t h\Big(\sum_{j = 1}^N\int_0^s \theta_N U_jV_{ji}\varphi(s-u)dZ_u^{N,j}\Big)ds\Big).
\intertext{We rewrite this as}
\label{eq:timechange0}
Z_t^{N,i} & = Y_i\Big(\int_0^t  h(I_s^{N,i}) ds\Big)
\intertext{with}
\label{eq:JN} 
I_s^{N,i} & = \int_0^s \varphi(s-u) dJ_u^{N,i} \quad \text{ and } \quad J_u^{N,i} = \theta_N \sum_{j=1}^N U_j V_{ji} Z_u^{N,j} = \theta_N \sum_{j=1}^N U_jV_{ji}Y_j\Big(\int_0^u
h(I_s^{N,j})ds\Big).
\end{align}
To obtain convergence results, we introduce the compensated point processes
\begin{align*}
X^{N,i}_t &:= \sum_{j=1}^N U_jV_{ji}\Big(Y_j\Big(\int_0^t
h(I_s^{N,j})ds\Big) - \int_0^t h(I_s^{N,j}) ds\Big).
\end{align*}
In Theorem~\ref{T1}.1 we show convergence uniformly in the vertices of the graph, hence we need a result on convergence of $\frac 1NX^{N,i}$, uniform in $i\leq N$.
\begin{lemma}\label{l:poi2}
	It holds that
	\begin{align}
	\sup_{i\leq N}\sup_{0\leq s\leq t}\frac 1N \big\vert X^{N,i}_s\big\vert \to 0
	\end{align}
	almost surely (and in $L^2$).
\end{lemma}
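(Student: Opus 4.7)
My plan is to exploit the martingale structure of $X^{N,i}$ and combine a concentration inequality with a union bound over the $N$ vertices.

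First, set $M^j_t := Y_j\bigl(\int_0^t h(I^{N,j}_s)\,ds\bigr) - \int_0^t h(I^{N,j}_s)\,ds$, which is a compensated counting-process martingale for each $j\leq N$. Because $Y_1,\dots,Y_N$ are independent Poisson processes, the $M^j$ are mutually orthogonal purely discontinuous martingales (no common jump times). Since $U_j$ and $V_{ji}$ are $\mathcal F_0$-measurable, $X^{N,i} = \sum_{j=1}^N U_j V_{ji} M^j$ is itself a purely discontinuous martingale with jumps bounded by $|U_j V_{ji}|\leq 1$ and predictable quadratic variation
\begin{equation*}
\langle X^{N,i}\rangle_t = \sum_{j=1}^N U_j^2 V_{ji}^2 \int_0^t h(I^{N,j}_s)\,ds \leq N t \|h\|_\infty
\end{equation*}
almost surely, using $U_j^2 = 1$, $V_{ji}^2 \in\{0,1\}$, and the boundedness of $h$ from Assumption~\ref{ass1}. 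The key point is that this pointwise bound does not depend on the (complicated) random intensities $I^{N,j}$.

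Next, I apply Freedman's inequality (a Bernstein-type bound for c\`adl\`ag martingales with bounded jumps and a pointwise bounded predictable quadratic variation):
\begin{equation*}
\mathbb{P}\bigl(\sup_{s\leq t}|X^{N,i}_s|\geq N\varepsilon\bigr) \leq 2\exp\Bigl(-\tfrac{N\varepsilon^2}{2(t\|h\|_\infty + \varepsilon/3)}\Bigr) =: 2e^{-c(\varepsilon,t) N}.
\end{equation*}
A union bound over $i\leq N$ then yields $\mathbb{P}\bigl(\sup_{i\leq N}\sup_{s\leq t}|X^{N,i}_s|/N \geq \varepsilon\bigr) \leq 2N e^{-c(\varepsilon,t)N}$, which is summable in $N$; the Borel-Cantelli lemma gives the almost-sure statement. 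For $L^2$, the Gaussian-type tail implies uniformly bounded $L^p$-norms of $\sup_{i\leq N}\sup_{s\leq t}|X^{N,i}_s|/N$ for every $p\geq 1$ (integrate the tail), so uniform integrability upgrades almost-sure convergence to $L^2$. An equivalent elementary route avoiding Freedman is to invoke BDG: the above bound on $\langle X^{N,i}\rangle_t$ together with the pointwise estimate $[X^{N,i}]_t \leq \sum_j N^j_t$, where $N^j_t := Y_j\bigl(\int_0^t h(I^{N,j}_s)\,ds\bigr) \leq Y_j(t\|h\|_\infty)$, yields $\mathbb{E}[\sup_s |X^{N,i}_s|^{2q}] \leq C_q N^q$; then Markov combined with a union bound is summable for any $q>2$.

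The main obstacle I anticipate is producing a tail strong enough to survive the extra factor $N$ from the union bound: a bare second-moment estimate from Doob gives only $O(1/N)$ per vertex, and summing $N$ such contributions loses the convergence entirely. Either Freedman or BDG with a sufficiently high moment circumvents this, and both rely only on the clean almost-sure bound on $\langle X^{N,i}\rangle_t$ inherited from the boundedness of $h$.
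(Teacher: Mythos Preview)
Your proof is correct. Both your approach and the paper's share the same skeleton---obtain a tail or moment bound for $\sup_{s\leq t}|X^{N,i}_s|$ that is strong enough to survive a union bound over the $N$ vertices, then invoke Borel--Cantelli---but they differ in how the bound is produced. The paper splits $X^{N,i}$ into its excitatory and inhibitory parts, observes that each is a compensated point process with intensity at most $N\|h\|_\infty$, and then uses the explicit fact that the sixth centred Poisson moment is $O(\lambda^3)$ together with Doob's $L^6$ inequality to get $\mathbb{E}\bigl[\sup_{s\leq t}(X^{N,i}_s/N)^6\bigr]=O(N^{-3})$, summable after multiplying by $N$. Your primary route via Freedman's inequality is somewhat cleaner: it exploits only the almost-sure bound $\langle X^{N,i}\rangle_t\leq Nt\|h\|_\infty$ and the jump bound $|\Delta X^{N,i}|\leq 1$, and yields an exponential tail that kills the factor $N$ with room to spare; it also avoids the slightly ad hoc sixth-moment computation. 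Your alternative BDG route with a high enough moment is essentially the paper's argument in disguise. Either way, the $L^2$ statement follows immediately from uniform integrability (bounded higher moments), which the paper leaves implicit.
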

\begin{proof}
	The sixth centered moment of a Poisson random variable with parameter $\lambda$ is of order $\lambda^3$. For each $i$, we may split $X^{N,i}$ up into its exciting and inhibiting part,
	\begin{align*}
		X^{N,i}_t = \sum_{j:U_j=1} V_{ji}&\Big(Y_j\Big(\int_0^t
		h(I_s^{N,j})ds\Big) - \int_0^t h(I_s^{N,j}) ds\Big) \\ &- \sum_{j:U_j=-1} V_{ji}\Big(Y_j\Big(\int_0^t
		h(I_s^{N,j})ds\Big) - \int_0^t h(I_s^{N,j}) ds\Big).
	\end{align*}
	This is the difference of two compensated point processes with intensity bounded by $N||h||$, hence the sixth moment of $X^{N,i}_t$ is at most of order $(tN||h||)^3$. We obtain, using Doob's martingale inequality for the martingale $X^{N,i}$,
	\begin{align*}
	\mathbf E\Big[\sup_{i\leq N}\sup_{0\leq s\leq t}\big(\frac 1N X^{N,i}_s\big)^6\Big]
	&\leq \sum_{i=1}^N \mathbf E\Big[\sup_{0\leq s\leq t}\big(\frac 1N X^{N,i}_s\big)^6\Big]
	\\ &\leq \Big(\frac 65\Big)^6 \sum_{i=1}^N \mathbf E\Big[\big(\frac 1N X^{N,i}_t\big)^6\Big]
	= O(N^{-2}),
	\end{align*}
	which is summable. Using Borel-Cantelli, almost-sure convergence follows.
\end{proof}
\noindent
In Theorem 1.3, we scale up by $\sqrt{N}$ and investigate on the correlation structure at different vertices, hence we need a result on joint convergence of $\big(\frac 1{\sqrt{N}}X^{N,i}\big)_{i=1,...,n}$. Therefore we introduce the processes
\begin{align*}
X^{N,0}_t & = \sum_{j=1}^N \Big(Y_j\Big(\int_0^t
h(I_s^{N,j})ds\Big) - \int_0^t h(I_s^{N,j}) ds\Big),
\\ X^{N,U}_t & = \sum_{j=1}^N U_j\Big(Y_j\Big(\int_0^t
h(I_s^{N,j})ds\Big) - \int_0^t h(I_s^{N,j}) ds\Big),
\\ \widetilde X^{N,i}_t & = \sum_{j=1}^N U_j(V_{ji}-q)\Big(Y_j\Big(\int_0^t
h(I_s^{N,j})ds\Big) - \int_0^t h(I_s^{N,j}) ds\Big),
\intertext{and split}
X^{N,i}_t &=\widetilde X^{N,i}_t+qX^{N,U}_t,\text{   for }i=1,2,...
\end{align*}
Note that only the first summand in the latter equation depends on the vertex $i$.
\begin{lemma}\label{l:donsker}
	Assume $\sup_{i\leq N}\sup_{s\leq t}|I^{N,i}_s-I_s| \to 0$ almost surely. Then 
	\begin{align*}
	\frac 1{\sqrt{N}}X^N=\frac 1{\sqrt{N}}\big( X^{N,U},X^{N,0},\widetilde X^{N,1},...,\widetilde X^{N,n}\big) \xRightarrow{N\to\infty} \big( M^U,M^{0},\widetilde M^{1},...,\widetilde M^{n}\big)=:M,
	\end{align*}
	where $M^U,M^{0},\widetilde M^{1},...,\widetilde M^{n}$ are local martingales with covariance structure given by 
	\begin{equation}
	\begin{aligned}\label{eq:cov}
	d\langle \widetilde M^k,\widetilde M^l \rangle_t &= 1_{k=l}q(1-q)h(I_t)dt, \qquad &d\langle M^U,M^U \rangle_t = d\langle M^0,M^0 \rangle_t = h(I_t)dt,
	\\d\langle \widetilde M^k,M^U \rangle_t &= d\langle \widetilde M^k,M^0 \rangle_t = 0,
	&d\langle M^U,M^0 \rangle_t = (2p-1)h(I_t)dt.
	\end{aligned}
	\end{equation}
	for any $k,l\neq 0$. We can extend the probability space such that the convergence is almost surely and in $L^2$, uniformly on compact time intervals.
\end{lemma}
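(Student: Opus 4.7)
The plan is to apply the multidimensional martingale functional central limit theorem (for instance Theorem~7.1.4 in \cite{EthierKurtz86}) to the $(n+2)$-dimensional process $N^{-1/2}X^N$. Conditional on the weights $(U_j)_j, (V_{ji})_{j,i}$, the processes $Y_1,\ldots,Y_N$ remain independent unit-rate Poisson processes, so each coordinate of $N^{-1/2}X^N$ is a linear combination of the independent compensated Poisson martingales $\widetilde Y_j := Y_j(\int_0^\cdot h(I^{N,j}_s)ds) - \int_0^\cdot h(I^{N,j}_s)ds$ with weight-measurable coefficients, and is therefore itself a local martingale in the natural filtration.

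Two hypotheses need to be verified. First, jump negligibility: every jump of $N^{-1/2}X^N$ has modulus at most $C N^{-1/2}$, so $\sup_{s\leq t}|\Delta (N^{-1/2}X^N_s)|\to 0$ almost surely. Second, convergence of the predictable covariation matrix to the deterministic limit prescribed in \eqref{eq:cov}. Using the orthogonality of the $\widetilde Y_j$ and $U_j^2=1$, one has for example
\begin{equation*}
\frac{1}{N}\langle \widetilde X^{N,k},\widetilde X^{N,l}\rangle_t = \frac{1}{N}\sum_{j=1}^N (V_{jk}-q)(V_{jl}-q)\int_0^t h(I^{N,j}_s)ds,
\end{equation*}
and five analogous expressions for $\langle X^{N,U}\rangle, \langle X^{N,0}\rangle, \langle X^{N,U}, X^{N,0}\rangle, \langle \widetilde X^{N,k}, X^{N,U}\rangle, \langle \widetilde X^{N,k}, X^{N,0}\rangle$. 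Each is of the form $N^{-1}\sum_j c_j \int_0^t h(I^{N,j}_s)ds$ for some bounded i.i.d.\ weights $c_j$; I decompose
\begin{equation*}
\frac{1}{N}\sum_j c_j h(I^{N,j}_s) = \Big(\frac{1}{N}\sum_j c_j\Big)h(I_s) + \frac{1}{N}\sum_j c_j\big(h(I^{N,j}_s)-h(I_s)\big).
\end{equation*}
The error term is bounded by $h_{Lip}\cdot (N^{-1}\sum_j |c_j|)\cdot \sup_{j,s\leq t}|I^{N,j}_s-I_s|$ and vanishes almost surely by the hypothesis, while the prefactor $N^{-1}\sum_j c_j$ converges almost surely by the strong law of large numbers to $\mathbb E[c_1]$, which equals $1$, $2p-1$, $0$, $0$ or $1_{k=l}q(1-q)$ in the six cases under consideration. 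This reproduces exactly \eqref{eq:cov}.

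The main obstacle is arranging the LLN argument so that the convergence of the covariations is uniform in $s\leq t$ (not merely pointwise). This is handled because the random prefactor $N^{-1}\sum_j c_j$ does not depend on $s$, so its LLN convergence is trivially uniform in time, and the bound on the error term is uniform over $s\in[0,t]$ by the uniform convergence hypothesis on $I^{N,j}$ and the uniform boundedness of $h$. The continuous integrator $\int_0^t h(I_s)ds$ then delivers uniform-in-$s$ convergence of the full covariation.

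Once these hypotheses are in place, the martingale CLT yields weak convergence of $N^{-1/2}X^N$ in $\mathcal D_{\mathbb R^{n+2}}([0,\infty))$ to a continuous Gaussian martingale $M$ with covariance structure \eqref{eq:cov}. To upgrade this to almost-sure convergence on an enlarged probability space I invoke the Skorohod representation theorem; since the limit $M$ has continuous paths, convergence in the Skorohod $J_1$-topology coincides with uniform convergence on compact intervals. Finally, $L^2$ convergence follows by uniform integrability of $\sup_{s\leq t}|N^{-1/2}X^N_s|^2$, which is a consequence of Doob's $L^4$-maximal inequality together with the $O(N^{-2})$ fourth moment estimate for compensated sums of Poisson processes with intensities bounded by $\|h\|$, analogous to the computation used in the proof of Lemma~\ref{l:poi2}.
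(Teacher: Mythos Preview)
Your proof is correct and follows essentially the same route as the paper: both verify the jump-negligibility and predictable-covariation hypotheses of a martingale functional CLT (you cite Ethier--Kurtz~7.1.4, the paper cites Jacod--Shiryaev~VIII.3.8), then upgrade via Skorohod representation and establish $L^2$-convergence through uniform integrability from a fourth-moment bound. One small wording slip: the fourth moment of $N^{-1/2}X^N_t$ is $O(1)$, not $O(N^{-2})$---it is the individual summands that are $O(N^{-2})$---but this is exactly what uniform integrability requires, so the argument stands.
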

\noindent 
We further extend the probability space by Brownian motions $B^U, B^0, \widetilde B^i$, such that
\[M^U_t=\int_0^t\sqrt{h(I_s)}dB^U_s,\quad M^0_t=\int_0^t\sqrt{h(I_s)}dB^0_s,\quad\widetilde M^k_t=\int_0^t\sqrt{q(1-q)h(I_s)}d\widetilde B^k_s.\]
Necessarily, $\langle \widetilde B^k,\widetilde B^l \rangle_t=1_{k=l}t$, $\langle B^U,B^0 \rangle_t=(2p-1)t$ and $\langle \widetilde B^k,B^U \rangle_t=\langle \widetilde B^k,B^0 \rangle_t=0$ for $k,l=1,...,n$. Finally observe that, on this probability space,
\begin{align}\label{eq:brownian}
\frac 1{\sqrt{N}}X^{N,k} \to \widetilde M^k+qM^U = q\int_0^\cdot \sqrt{h(I_s)}B^U_s + \sqrt{q(1-q)}\int_0^\cdot \sqrt{h(I_s)}\widetilde B^k_s,
\end{align}
almost surely and in $L^2$ by the continuous mapping theorem.
\begin{proof}[Proof of Lemma \ref{l:donsker}]
	The result is a martingale central limit theorem, hence we need to identify the covariance structure. By \cite[Theorem I.4.52]{jacod2013limit}, 
	\begin{align*}
	\big[\frac 1{\sqrt{N}}\widetilde X^{N,k},\frac 1{\sqrt{N}}\widetilde X^{N,l}\big]_t &= \sum_{s\leq t}\sum_{j=1}^N\Delta Z^{N,j}_s \frac 1{\sqrt{N}}U_j(V_{jk}-q) \frac 1{\sqrt{N}}U_j(V_{jl}-q)
	\\ &= \sum_{j=1}^NZ^{N,j}_t \frac 1{\sqrt{N}}U_j(V_{jk}-q) \frac 1{\sqrt{N}}U_j(V_{jl}-q).
	\end{align*}
	The compensator of $Z^{N,j}$ is given by $\int_0^\cdot h(I^{N,j}_s)ds$, whence, by \cite[Proposition I.4.50 b)]{jacod2013limit},
	\begin{align*}
	\big\langle \frac 1{\sqrt{N}}\widetilde X^{N,k},\frac 1{\sqrt{N}}\widetilde X^{N,l} \big\rangle_t &= \sum_{j=1}^N \frac 1{\sqrt{N}}U_j(V_{jk}-q) \frac 1{\sqrt{N}}U_j(V_{jl}-q)\int_0^t h(I^{N,j}_s)ds \\&\xrightarrow{N\to\infty} 1_{k=l}q(1-q)\int_0^t h(I_s)ds.
	\end{align*}
	Analogously we obtain
	\begin{align*}
	\langle \frac 1{\sqrt{N}}\widetilde X^{N,U},\frac 1{\sqrt{N}}\widetilde X^{N,U} \rangle_t &= \sum_{j=1}^N \frac 1{\sqrt{N}}U_j \frac 1{\sqrt{N}}U_j\int_0^t h(I^{N,j}_s)ds \to \int_0^t h(I_s)ds
	\\ \langle \frac 1{\sqrt{N}}\widetilde X^{N,0},\frac 1{\sqrt{N}}\widetilde X^{N,U} \rangle_t &= \sum_{j=1}^N \frac 1{\sqrt{N}} \frac 1{\sqrt{N}}U_j\int_0^t h(I^{N,j}_s)ds \to (2p-1)\int_0^t h(I_s)ds
	\\ \langle \frac 1{\sqrt{N}}\widetilde X^{N,0},\frac 1{\sqrt{N}}\widetilde X^{N,0} \rangle_t &= \sum_{j=1}^N \frac 1{\sqrt{N}} \frac 1{\sqrt{N}}\int_0^t h(I^{N,j}_s)ds \to \int_0^t h(I_s)ds
	\\ \langle \frac 1{\sqrt{N}}\widetilde X^{N,k},\frac 1{\sqrt{N}}\widetilde X^{N,0} \rangle_t &= \sum_{j=1}^N \frac 1{\sqrt{N}}U_j(V_{jk}-q) \frac 1{\sqrt{N}}\int_0^t h(I^{N,j}_s)ds \to 0
	\\ \langle \frac 1{\sqrt{N}}\widetilde X^{N,k},\frac 1{\sqrt{N}}\widetilde X^{N,U} \rangle_t &= \sum_{j=1}^N \frac 1{\sqrt{N}}U_j(V_{jk}-q) \frac 1{\sqrt{N}}U_j\int_0^t h(I^{N,j}_s)ds \to 0.
	\end{align*}
	The desired convergence in distribution follows from \cite[Theorem VIII.3.8, b)(ii)$\to$(i)]{jacod2013limit}: $[\sup -\beta_5]$ holds as the first characteristic is 0, and $[\hat{\delta}_5-\mathbb R]$ is obvious as the jumps of $\widetilde X^{N,k},X^{N,U},X^{N,0}$ have size $1/{\sqrt{N}}$. Finally, condition $[\gamma_5-\mathbb R]$ is precisely the convergence of the predictable quadratic covariation above. 
	\\By Skorohod's Theorem, we can extend our probability space such that this convergence is almost surely with respect to Skorohod distance, and by continuity of the limit it is equivalent to local uniform convergence. For $L^2$ convergence, observe that $ X^{N,U},X^{N,0},\widetilde X^{N,k}$ are compensated point processes with intensity bounded by $N||h||$. Let $Y$ be a unit rate Poisson process and $B$ be a standard Brownian motion, such that $M^U_t=B_{\int_0^th(I_s)ds}$(\cite[Theorem~16.4]{b2Kallenberg2002}). Then uniform integrability follows from the computation
	\begin{align*}
	\mathbf E\Big[\sup_{0\leq s\leq t} \Big(\frac{1}{\sqrt{N}}X^{N,U}_s - M^U_s\Big)^4\Big]^{1/4} 
	 & \leq C
	\mathbf E\Big[\Big(\frac{1}{\sqrt{N}}X^{N,U}_t - M^U_t\Big)^4\Big]^{1/4}
	\\& \leq C\Big(	\mathbf E\Big[\Big(\frac{1}{\sqrt{N}} X^{N,U}_t\Big)^4\Big]^{1/4} + \mathbf E\Big[\Big(M^U_t\Big)^4\Big]^{1/4}\Big)
	\\ & \leq C \Big(\mathbf E\Big[\sup_{0\leq s\leq t||h||} \Big(\frac{1}{\sqrt{N}}(Y(Ns)-Ns)\Big)^4\Big]^{1/4}
	+ \mathbf E\Big[\sup_{0\leq s\leq t||h||}(B_s)^4\Big]^{1/4}\Big)
	\\ & \leq C \Big(\mathbf E\Big[\Big(\frac{1}{\sqrt{N}}(Y(Nt||h||)-Nt||h||)\Big)^4\Big]^{1/4}
	+ \mathbf E\Big[(B_{t||h||})^4\Big]^{1/4}\Big)
	\\ & = C\Big( \big(3(t||h||)^2 + t||h||/N\big)^{1/4} + (3(t||h||)^2)^{1/4} \Big) ,
	\end{align*}
	where we used Doob's and Minkovski's inequalities. In the third inequality, we used the time-change representation of point processes (see e.g.\ Chapter~6 of \cite{EthierKurtz86}), which we already used in Lemma~\ref{l:reform}.
\end{proof}

\subsection{Results on the synaptic weights}
\label{ss:weights}
In this section we collect results on different types of mean values of $U_j, V_{ji}$. We start with laws of large numbers, which we need to derive mean-field limits.
\begin{lemma}\label{l:LLN}
	It holds that 
	\begin{align}
	\label{eq:LLN1}
	\sup_{j\leq N}|\frac{1}{N} \sum_{i=1}^N V_{ji} - q| & \xrightarrow{N\to\infty} 0,
	\\ \label{eq:LLN2}
	\sup_{j\leq N}|\frac{1}{N} \sum_{i=1}^N U_iV_{ji} - (2p-1)q| & \xrightarrow{N\to\infty} 0,
	\\ \label{eq:LLN3}
	\sup_{j\leq N}|\frac{1}{N} \sum_{i=1}^N U_iV_{ji}V_{ik} - (2p-1)q^2| & \xrightarrow{N\to\infty} 0,
	\end{align}
	almost surely.
\end{lemma}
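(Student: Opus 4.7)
The three statements are all uniform laws of large numbers for averages of bounded, independent (in $i$) random variables, so the plan is identical in each case: fix the outer indices, apply Hoeffding's inequality to the inner iid sum, union-bound over $j\leq N$, and use Borel--Cantelli to promote concentration to almost-sure uniform convergence. First I would handle \eqref{eq:LLN1}: for fixed $j$, the variables $(V_{ji})_{i=1}^N$ are iid Bernoulli$(q)$ with values in $[0,1]$, so Hoeffding gives $\mathbb P(|N^{-1}\sum_i V_{ji} - q| > \varepsilon) \leq 2e^{-2N\varepsilon^2}$. A union bound over $j$ produces $2Ne^{-2N\varepsilon^2}$, which is summable in $N$; Borel--Cantelli then yields the almost-sure statement.

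Next I would do \eqref{eq:LLN2} in exactly the same way. For fixed $j$, the family $(U_i V_{ji})_{i=1}^N$ is iid because $(U_i, V_{ji})_{i=1}^N$ is iid across $i$ (by independence of all the $U$'s and $V$'s). The summands lie in $[-1,1]$ and, by independence of $U_i$ and $V_{ji}$, have common mean $\mathbb E[U_i]\,\mathbb E[V_{ji}] = (2p-1)q$. Hoeffding plus union bound plus Borel--Cantelli finishes this case identically.

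For \eqref{eq:LLN3} the only new issue is to check joint independence and the mean. Fix $j$ and $k$; for each $i$, the three random variables $U_i$, $V_{ji}$, $V_{ik}$ are independent (the two edges $j\to i$ and $i\to k$ are distinct ordered pairs in the Erd\H{o}s--R\'enyi graph, and $U_i$ is independent of all edge indicators), so $\mathbb E[U_i V_{ji} V_{ik}] = (2p-1)q^2$. Moreover, as $i$ varies, the triples $(U_i, V_{ji}, V_{ik})$ depend on pairwise disjoint sets of underlying Bernoullis and $\pm 1$-variables, so the summands $(U_i V_{ji} V_{ik})_{i=1}^N$ are iid in $[-1,1]$. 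Hoeffding gives the same exponential tail, and a union bound (over $j$ with $k$ fixed, or over both $j$ and $k$ if we want joint uniformity) still yields a summable series, so Borel--Cantelli concludes.

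The only point requiring any care is the independence check in \eqref{eq:LLN3}, where one has to be explicit that the ordered pairs $(j,i)$ and $(i,k)$ for varying $i$ are all distinct, so that no edge indicator appears twice across the sum and Hoeffding applies to genuinely iid summands. Beyond this small piece of bookkeeping, no analytic input beyond Hoeffding is needed, and the proof is short.
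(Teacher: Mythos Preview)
Your approach is essentially the paper's: a concentration bound for each fixed $j$, a union bound over $j\leq N$, and Borel--Cantelli; the paper uses a sixth-moment Markov inequality (yielding $O(N^{-2})$ after the union bound) where you use Hoeffding, which is a cosmetic difference. One small correction to your independence check in \eqref{eq:LLN3}: when $j\neq k$, the summands for $i=j$ and $i=k$ both involve the same edge indicator $V_{jk}$ (since $(j,i)|_{i=k}=(j,k)=(i,k)|_{i=j}$), so the family $(U_iV_{ji}V_{ik})_{i\leq N}$ is not literally iid; excluding those two bounded terms (which contribute at most $2/N$ to the average) restores genuine independence for the remaining $N-2$ summands and your Hoeffding step goes through unchanged.
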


\begin{proof}
	For \eqref{eq:LLN1}, observe that the sixth central moment of a binomial distribution with parameters $N$ and $p$ is of order $N^3$, whence
	\begin{equation}
	\begin{aligned}\label{eq:LLNunif}
	\mathbf P\Big( \sup_{j\leq N} |\frac{1}{N} \sum_{i=1}^N V_{ji}-q|>\varepsilon\Big) & \leq \frac 1{\varepsilon^6}\sum_{j\leq N} \mathbf E\Big(\frac{1}{N} \sum_{i=1}^N V_{ji}-q \Big)^6 
	\\ & = \frac 1{\varepsilon^6N^5} \mathbf E\Big(\sum_{i=1}^N V_{1i}-q\Big)^6  = O(N^{-2}),
	\end{aligned}
	\end{equation}
	is summable. Uniform convergence follows from the Borel-Cantelli Lemma. Similarly in \eqref{eq:LLN2} and \eqref{eq:LLN3}.
\end{proof}

\noindent
In order to derive results on the fluctuation around the mean-field limit, we need the following central limit theorems. First define $W^N=\frac{1}{\sqrt{N}}\sum_{j=1}^N(U_j-(2p-1))$ and $\widetilde W^{N,i}=\frac{1}{\sqrt{N}}\sum_{j=1}^NU_j(V_{ji}-q)$, such that 
\begin{align*}
W^{N,i} := \frac{1}{\sqrt{N}}\sum_{j=1}^N(U_jV_{ji}-(2p-1)q) = qW^N + \widetilde W^{N,i}.
\end{align*}

\begin{lemma}\label{l:CLT}
	For any $n\in\mathbb N$ it holds that
	\begin{align}\label{eq:weightCLT}
	\big(W^N, \widetilde W^{N,1},...,\widetilde W^{N,n}\big) \xRightarrow{N\to\infty} \big(W, \sqrt{q(1-q)}\widetilde W^{1},..., \sqrt{q(1-q)}\widetilde W^{n}\big),
	\end{align}
	where $W\sim N(0,4p(1-p))$, $\widetilde W^{i}\sim N(0,1)$ and $W, \widetilde W^{1}, \widetilde W^{2},...$ are independent. Furthermore, for any $i\in\{1,...,n\}$, 
	\begin{align}\label{eq:CLT}
	W^{N,i} \xRightarrow{N\to\infty} qW+\sqrt{q(1-q)}\widetilde W^i,
	\end{align}
	as well as
	\begin{align}
	\label{eq:CLT1}
	\overline W^N &:=\frac 1N \sum_{i=1}^N W^{N,i} = qW^N + \frac 1N \sum_{i=1}^N \widetilde W^{N,i}\xRightarrow{N\to\infty} qW,
	\\ \label{eq:CLT2}
	\overline W^{N,U} &:=\frac 1N \sum_{i=1}^N U_iW^{N,i} = qW^N\frac 1N \sum_{i=1}^N U_i + \frac 1N \sum_{i=1}^NU_i \widetilde W^{N,i}\xRightarrow{N\to\infty} (2p-1)qW,
	\\ \label{eq:CLT3}
	\overline W^{N,UVk} &:=\frac 1N \sum_{i=1}^N U_iV_{ik}W^{N,i} = qW^N\frac 1N \sum_{i=1}^N U_iV_{ik} + \frac 1N \sum_{i=1}^NU_iV_{ik} \widetilde W^{N,i}\xRightarrow{N\to\infty} (2p-1)q^2W,
	\end{align}
	for any $k\in\mathbb N$.
\end{lemma}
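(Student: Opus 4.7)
\textbf{Plan of proof for Lemma~\ref{l:CLT}.} The core of the lemma is the joint central limit theorem \eqref{eq:weightCLT}; once it is established, \eqref{eq:CLT} follows immediately by the continuous mapping theorem applied to $W^{N,i}=qW^N+\widetilde W^{N,i}$, and \eqref{eq:CLT1}--\eqref{eq:CLT3} reduce to Slutsky-type arguments.

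My approach to \eqref{eq:weightCLT} is to condition on $\mathcal F_U:=\sigma(U_1,\dots,U_N)$. Two structural features drive the whole argument. First, $\widetilde W^{N,i}$ depends on $\mathcal F_U$ and on the column $(V_{ji})_{j\leq N}$, and different columns are independent, so the variables $\widetilde W^{N,1},\dots,\widetilde W^{N,n}$ are conditionally independent given $\mathcal F_U$. Second, since $U_j^2=1$ almost surely, the conditional variance of $\widetilde W^{N,i}$ equals $\frac{1}{N}\sum_j U_j^2\, q(1-q)=q(1-q)$ for every realization of $U$, i.e.\ it is non-random. A conditional Lindeberg CLT, whose Lindeberg condition is trivial since the summands $\frac{1}{\sqrt N}U_j(V_{ji}-q)$ are bounded by $2/\sqrt N$ in absolute value, then yields $\widetilde W^{N,i}\mid\mathcal F_U\Rightarrow N(0,q(1-q))$ almost surely. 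Together with the classical CLT $W^N\Rightarrow N(0,4p(1-p))$, one computes the joint characteristic function
\[
E\Bigl[\exp\bigl(i\xi_0 W^N+i\sum_{i=1}^n\xi_i\widetilde W^{N,i}\bigr)\Bigr]=E\Bigl[e^{i\xi_0 W^N}\prod_{i=1}^n E\bigl[e^{i\xi_i\widetilde W^{N,i}}\mid\mathcal F_U\bigr]\Bigr],
\]
and dominated convergence (the integrand is bounded by $1$) gives the limiting characteristic function $E[e^{i\xi_0 W}]\cdot\prod_{i=1}^n e^{-\xi_i^2 q(1-q)/2}$, a product that in particular forces $W$ and $(\widetilde W^i)_i$ to be independent.

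For \eqref{eq:CLT1}--\eqref{eq:CLT3} I would use the decomposition already displayed in the statement: the first summand is $qW^N\cdot\frac{1}{N}\sum_i a_i$ with $a_i\in\{1,U_i,U_iV_{ik}\}$; the coefficient converges almost surely by a law of large numbers (the classical one for $\frac{1}{N}\sum_i U_i$, and the same argument as Lemma~\ref{l:LLN} for $\frac{1}{N}\sum_i U_iV_{ik}$), so Slutsky handles this term. The residual $\frac{1}{N}\sum_i a_i\widetilde W^{N,i}=\frac{1}{N^{3/2}}\sum_{i,j}a_i U_j(V_{ji}-q)$ has mean $O(N^{-3/2})$, since whenever $V_{ji}$ does not coincide with an indicator already appearing in $a_i$ the factor $V_{ji}-q$ is independent of the rest. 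Expanding the second moment, only diagonal pairs $(j,i)=(j',i')$ survive and contribute at order $O(N^{-1})$, whence $\frac{1}{N}\sum_i a_i\widetilde W^{N,i}\to 0$ in $L^2$, and a further application of Slutsky completes the proof.

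The main obstacle is making the conditional CLT argument precise: one needs the conditional convergence to hold almost surely in $\mathcal F_U$ so that dominated convergence may be applied to the characteristic functions; this is secured by the uniform boundedness of the summands. The remaining combinatorics -- in particular, tracking which cross-terms in the second moment of the residual can fail to factorize because of coincidences such as $V_{ji}$ meeting $V_{ik}$ when $(j,i)=(i,k)$ -- is routine but deserves careful bookkeeping.
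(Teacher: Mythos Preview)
Your proposal is correct, and for \eqref{eq:CLT}--\eqref{eq:CLT3} it essentially matches the paper: you and the paper both use the displayed decomposition $W^{N,i}=qW^N+\widetilde W^{N,i}$, handle the first summand by Slutsky together with a law of large numbers for $\frac1N\sum_i a_i$, and kill the residual $\frac1N\sum_i a_i\widetilde W^{N,i}$ by a second-moment computation showing the summands are uncorrelated across $i$.

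Where you differ is in the proof of \eqref{eq:weightCLT}. The paper simply observes that the row vectors
\[
\xi_j:=\bigl(U_j-(2p-1),\,U_j(V_{j1}-q),\dots,U_j(V_{jn}-q)\bigr),\qquad j=1,\dots,N,
\]
are i.i.d.\ with mean zero, computes their covariance matrix (which is diagonal with entries $4p(1-p),q(1-q),\dots,q(1-q)$ because $E[(U_j-(2p-1))U_j]\,E[V_{ji}-q]=0$ and $E[(V_{ji}-q)(V_{ji'}-q)]=0$ for $i\neq i'$), and invokes the multivariate CLT directly. This gives the joint Gaussian limit with independent components in one stroke. Your conditional route---fix $\mathcal F_U$, apply a Lindeberg CLT columnwise using that the conditional variance $q(1-q)$ is non-random, then reassemble via characteristic functions and dominated convergence---is valid and makes the independence structure very transparent, but it is longer than necessary here because the $\xi_j$ are already i.i.d.\ without any conditioning. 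Your approach would become genuinely advantageous if the $U_j$ had a more complicated dependence structure (so the $\xi_j$ were no longer i.i.d.), whereas the paper's direct CLT exploits the full product structure of the model.
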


\begin{proof}
	First, \eqref{eq:weightCLT} follows from the central limit theorem, as 
	\begin{align*}
	\mathbf E[(U_j-(2p-1))^2]&=4p(1-p),\\
	\mathbf E[U_j^2(V_{ji}-q)^2]=\mathbf E[(V_{ji}-q)^2]&=q(1-q),
	\end{align*}
	and the summands of $W^N,\widetilde W^{N,1},...,\widetilde W^{N,n}$, respectively, are uncorrelated. Then \eqref{eq:CLT} follows by definition of $W^N$ and the continuous mapping theorem. Next observe that, for any $i,j,k,l,m\in\{1,...,N\}$, $k\neq l$,
	\begin{align*}
	\mathbf E[U_i(V_{ik}-q)U_j(V_{jl}-q)]&=0,
	\intertext{and hence}
	\mathbf E[\widetilde W^{N,k}\widetilde W^{N,l}]&=0,\\
	\mathbf E[U_k\widetilde W^{N,k}U_l\widetilde W^{N,l}]&=0,\\
	\mathbf E[U_kV_{km}\widetilde W^{N,k}U_lV_{lm}\widetilde W^{N,l}]&=0.
	\end{align*}
	Consequently the second summand in \eqref{eq:CLT1}, \eqref{eq:CLT2}, \eqref{eq:CLT3}, respectively, converges to $0$ in probability by the weak law of large numbers. The convergence of the first summands follows from convergence of $W^N$ and lemma \ref{l:LLN}
\end{proof} 

\noindent
Assume from now on that we work on a probability space where $W^N\to W$ almost surely. Then the convergence in \eqref{eq:CLT1}, \eqref{eq:CLT2}, \eqref{eq:CLT3} is in probability, and we obtain the following
\begin{lemma}\label{l:CLTmix}
	Assume $W^N\to W$ almost surely. Then
	\begin{align}\label{eq:squaremix}
	\frac 1N\sum_{i=1}^N\big(W^{N,i}\big)^2 \to_p q^2W^2 + q(1-q),
	\end{align}
	where $\to_p$ denotes convergence in probability.
\end{lemma}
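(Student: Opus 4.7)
The plan is to split $W^{N,i}=qW^N+\widetilde W^{N,i}$, square, and average over $i$:
\begin{align*}
\frac{1}{N}\sum_{i=1}^N (W^{N,i})^2 = q^2(W^N)^2 + 2qW^N\cdot\frac{1}{N}\sum_{i=1}^N\widetilde W^{N,i} + \frac{1}{N}\sum_{i=1}^N (\widetilde W^{N,i})^2.
\end{align*}
The first summand converges to $q^2W^2$ almost surely by the standing assumption $W^N\to W$. For the cross term, $\frac{1}{N}\sum_i \widetilde W^{N,i}\to 0$ in probability was established inside the proof of Lemma~\ref{l:CLT} (it appears as the second summand of \eqref{eq:CLT1} with $q=1$), and $W^N$ converges in probability, so the product vanishes in probability by Slutsky. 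Hence the whole assertion reduces to showing
\begin{align*}
\frac{1}{N}\sum_{i=1}^N (\widetilde W^{N,i})^2 \xrightarrow{\ p\ } q(1-q).
\end{align*}

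To establish this I would expand and separate the diagonal and off-diagonal contributions in $(j,k)$:
\begin{align*}
\frac{1}{N}\sum_{i=1}^N (\widetilde W^{N,i})^2 = \frac{1}{N^2}\sum_{i,j}(V_{ji}-q)^2 + \frac{1}{N^2}\sum_{j\neq k} U_jU_k \sum_{i=1}^N (V_{ji}-q)(V_{ki}-q),
\end{align*}
using $U_j^2=1$ in the diagonal piece. The diagonal piece is an average of $N^2$ i.i.d.\ random variables with mean $q(1-q)$ and bounded variance, so it converges to $q(1-q)$ in $L^2$ (the variance is $O(N^{-2})$).

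For the off-diagonal piece I would compute its $L^2$-norm directly. By independence of the $U_j$ from the $V_{\cdot \cdot}$ and by $\mathbf E[U_j U_k U_{j'}U_{k'}]=0$ unless $\{j,k\}=\{j',k'\}$ as multisets (always true when $j\neq k$, $j'\neq k'$, and the product is $\pm 1$), the second moment reduces to
\begin{align*}
\frac{2}{N^4}\sum_{j\neq k}\mathbf E\Bigl[\Bigl(\sum_{i=1}^N(V_{ji}-q)(V_{ki}-q)\Bigr)^{2}\Bigr] = \frac{2}{N^4}\sum_{j\neq k}\sum_{i=1}^N q^2(1-q)^2 = O(N^{-1}),
\end{align*}
where cross terms in $i$ vanish because, for $j\neq k$, the variables $(V_{ji}-q)(V_{ki}-q)$ are independent and mean-zero across distinct $i$. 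Therefore the off-diagonal contribution tends to $0$ in $L^2$, hence in probability, and combining the three limits yields the claim. The computation is routine once the decomposition is in place; the only thing to watch is the bookkeeping of which pairings of $U$-indices survive, but the constraint $j\neq k$, $j'\neq k'$ makes the enumeration immediate.
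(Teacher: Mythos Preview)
Your initial decomposition and handling of the first two summands match the paper exactly. For the third summand $\frac{1}{N}\sum_i(\widetilde W^{N,i})^2$, the paper takes a different route: it conditions on $(U_j)_j$, observes that the $(\widetilde W^{N,i})^2$ are then conditionally i.i.d.\ with conditional mean $q(1-q)$, applies a conditional law of large numbers, and passes back via dominated convergence. Your direct $L^2$ computation is a legitimate alternative and arguably more elementary.

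There is, however, an error in your off-diagonal argument: the claim that $\mathbf E[U_jU_kU_{j'}U_{k'}]=0$ unless $\{j,k\}=\{j',k'\}$ is false when $p\neq\tfrac12$, since $\mathbf E[U_j]=2p-1\neq 0$ (for instance, four distinct indices give $(2p-1)^4$, and three distinct indices give $(2p-1)^2$). Fortunately your conclusion survives, because the pairing constraint is enforced by the $V$-side rather than the $U$-side: in
\[
\mathbf E\Bigl[\sum_{i,i'}(V_{ji}-q)(V_{ki}-q)(V_{j'i'}-q)(V_{k'i'}-q)\Bigr],
\]
the centred Bernoulli variables are all independent, so the expectation vanishes unless every factor is matched. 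For $j\neq k$ and $j'\neq k'$ this forces $i=i'$ and $\{j,k\}=\{j',k'\}$, and in that case the value is $q^2(1-q)^2$. Since $\mathbf E[U_j^2U_k^2]=1$ on the surviving terms, your final bound $\frac{2}{N^4}\cdot N(N-1)\cdot N\cdot q^2(1-q)^2=O(N^{-1})$ is correct. Just replace the faulty $U$-orthogonality justification with the $V$-pairing argument.
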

\begin{proof}
	We decompose
	\begin{align}
	\frac 1N\sum_{i=1}^N\big(W^{N,i}\big)^2 &= q^2\underbrace{\big(W^N\big)^2}_{\to_{as} W^2} + q\underbrace{W^N}_{\to_{as} W}\underbrace{\frac 1N\sum_{i=1}^N\widetilde W^{N,i}}_{\to_p 0}+\underbrace{\frac 1N\sum_{i=1}^N\big(\widetilde W^{N,i}\big)^2}_{\to_p q(1-q)}\to_p q^2W^2 + q(1-q).
	\end{align}
	The convergence $\frac 1N\sum_{i=1}^N\widetilde W^{N,i}\to 0$ is in probability, as $(\widetilde W^{N,i})_i$ are uncorrelated. For the convergence in the last summand, observe that $\big(\widetilde W^{N,i}\big)^2$ are independent given $(U_j)_j$. Denote by $\mathbf{E}_{U}$ conditional expectation with respect to $(U_j)_j$. As $\mathbf E_U\big[\big(\widetilde W^{N,i}\big)^2\big]=q(1-q)$, it holds that 
	\begin{align*}
	\mathbf E_U\Big[\frac 1N\sum_{i=1}^N\big(\widetilde W^{N,i}\big)^2-q(1-q)\Big]\to 0,
	\end{align*}
	almost surely. It follows that
	\begin{align*}
	\mathbf E_U\Big[\Big(\frac 1N\sum_{i=1}^N\big(\widetilde W^{N,i}\big)^2-q(1-q)\Big)\wedge 1\Big]\to 0,
	\end{align*}
	and, by dominated convergence, the desired convergence in probability, \[\mathbf E\Big[\Big(\frac 1N\sum_{i=1}^N\big(\widetilde W^{N,i}\big)^2-q(1-q)\Big)\wedge 1\Big]\to 0.\]
\end{proof}

\subsection{Proof of Theorem~\ref{T1} and Corollary~\ref{cor1}}
\label{ss:proofT1}

In the following, we work on a probability space where the convergence in Lemma \ref{l:donsker} and in \eqref{eq:weightCLT} is in probability. The constant $C$ depends on $h,\varphi,t,p,q$ and may change from line to line. The proof of Theorem 1 is based on appropriate decompositions of the relevant processes, where one part vanishes as $N\to\infty$, and we pbtain convergence using Gronwall's inequality. We first collect some results on convergence and boundedness of these sub-processes in Lemma~\ref{l:t1prelim}. Let $I$ be the unique solution to \eqref{eq:IE} and $I^{N,i}$ as in \eqref{eq:IN} in Theorem~1. Define 
\begin{align*}
A^{N,i}_t  & = \frac 1N\sum_{j=1}^N U_jV_{ji} \Big( Y_j\Big(\int_0^t
h(I_s^{N,j})ds\Big) - \int_0^t h(I_s^{N,j}) ds\Big),
\\ D^{N,i}_t & = \frac 1N\sum_{j=1}^N \big( U_jV_{ji} - (2p-1)q \big) \int_0^t h(I_s) ds.
\end{align*}
Recall $B^U$ from Lemma~\ref{l:donsker}, $W$ from Lemma~\ref{l:CLT} and define
\begin{align*}
\mathcal A^{N}_t & = \frac1N\sum_{i=1}^NU_i\frac 1{\sqrt{N}}\sum_{j=1}^N U_jV_{ji} \Big(Y_j\Big(\int_0^t
h(I_s^{N,j})ds\Big) - \int_0^t h(I_s^{N,j}) ds \Big)
\\ &\qquad\qquad\qquad\qquad - (2p-1)q\int_0^t \sqrt{h(I_s)} dB^U_s,\\
\mathcal C_t^{N} & =  \frac1N\sum_{i=1}^NU_i\frac 1N \sum_{j=1}^N U_jV_{ji} \int_0^t \sqrt{N}\big(h(I_s^{N,j}) - h({I}_s)-(I_s^{N,j} - {I}_s)h'(I_s)\big) ds,\\
\mathcal D_t^{N} & = \Big( \frac1N\sum_{i=1}^NU_i\frac 1{\sqrt N} \sum_{j=1}^N (U_jV_{ji}-(2p-1)q) -(2p-1)qW \Big)\int_0^t h(I_s)ds,\\
\mathcal E_t^{N} & =  \frac1N\sum_{j=1}^NU_j\Big(\frac 1N \sum_{i=1}^N U_iV_{ji}-(2p-1)q\Big) \int_0^t \sqrt{N} (I_s^{N,j} - {I}_s)h'(I_s)ds.
\end{align*}
Further, recall $\widetilde B^i$ from Lemma~\ref{l:donsker}, $\widetilde W^i$ from Lemma~\ref{l:CLT} and define
\begin{align*}
\widetilde {\mathcal A}^{N,i}_t & = \frac 1{\sqrt{N}}\sum_{j=1}^N U_jV_{ji} \Big(Y_j\Big(\int_0^t
h(I_s^{N,j})ds\Big) - \int_0^t h(I_s^{N,j}) ds \Big)
\\ &\qquad\qquad\qquad\qquad -q\int_0^t \sqrt{h(I_s)}B^U_s + \sqrt{q(1-q)}\int_0^t \sqrt{h(I_s)}\widetilde B^k_s\\
\widetilde {\mathcal C}_t^{N,i} & =  \frac 1N \sum_{j=1}^N U_jV_{ji} \int_0^t \sqrt{N}\big(h(I_s^{N,j}) - h({I}_s)-(I_s^{N,j} - {I}_s)h'(I_s)\big) ds,\\
\widetilde {\mathcal D}_t^{N,i} & = \Big( \frac 1{\sqrt N} \sum_{j=1}^N \big(U_jV_{ji}-(2p-1)q\big)-\big(qW+\sqrt{q(1-q)}\widetilde W^i\big)\Big)\int_0^t h(I_s)ds.
\end{align*}
Denote by $\mathbf{E}_{UV}$ conditional expectation with respect to the configuration of the graph $(U_j)_j, (V_{ji})_{ji}$.	
\begin{lemma}\label{l:t1prelim}
For any $t>0$, it holds that
\begin{enumerate}
	\item[1.+2.] $\sup_{i\leq N}\sup_{s\leq t}A^{N,i}_s \xrightarrow{N\to\infty} 0$ and $\sup_{i\leq N}\sup_{s\leq t}D^{N,i}_s \xrightarrow{N\to\infty} 0$ almost surely,
\end{enumerate}
Assume $\sup_{i\leq N}\sup_{s\leq t}|I^{N,i}_s-I_s| \xrightarrow{N\to\infty} 0$, then
\begin{enumerate}
	\item[3.+4.] $\limsup_N\sup_{i\leq N}\sup_{0\leq s\leq t}\sqrt{N}A_s^{N,i}$ and $\limsup_N\sup_{i\leq N}\sup_{0\leq s\leq t}\sqrt{N}D^{N,i}$ are finite almost surely,
	\item[5.] $\limsup_N\frac1N\sum_{j=1}^N\mathbf{E}_{UV}\big[\sup_{0\leq s\leq t}\big(\sqrt{N} (I_s^{N,j} - {I}_s)\big)^2\big]$ is finite almost surely.
\end{enumerate}
We further have
\begin{enumerate}
	\item[6.-9.] $\mathbf{E}_{UV}\big[\sup_{0\leq s\leq t}\big(\mathcal A^{N}_s\big)^2\big]$, $\mathbf{E}_{UV}\big[\sup_{0\leq s\leq t}\big(\mathcal C^{N}_s\big)^2\big]$, $\mathbf{E}_{UV}\big[\sup_{0\leq s\leq t}\big(\mathcal D^{N}_s\big)^2\big]$, $\mathbf{E}_{UV}\big[\sup_{0\leq s\leq t}\big(\mathcal E^{N}_s\big)^2\big]$ converge to 0 in probability, as $N\to\infty$,
\end{enumerate}
and
\begin{enumerate}
	\setcounter{enumi}{9}
	\item $\mathbf{E}_{UV}\big[\sup_{0\leq s\leq t}\big(\widetilde {\mathcal A}^{N,i}_s\big)^2\big] \xrightarrow{N\to\infty} 0$ in probability, for any $i=1,2,...$.
	\item $\sup_{i\leq N}\mathbf{E}_{UV}\big[\sup_{0\leq s\leq t}\big(\widetilde {\mathcal C}^{N,i}_s\big)^2\big] \xrightarrow{N\to\infty} 0$ in probability,
	\item $\mathbf{E}_{UV}\big[\sup_{0\leq s\leq t}\big(\widetilde {\mathcal D}^{N,i}_s\big)^2\big] \xrightarrow{N\to\infty} 0$ in probability, for any $i=1,2,...$.
\end{enumerate}
\end{lemma}
We use 1 and 2 in the proof of Theorem~1.1, and 3, 4 and 11 to prove 5. Then, we use 5 to prove 9, 6-9 in Step 1 of the proof of Theorem~1.3, and 10-12 in Step 3 of the proof of Theorem~1.3. We give the proof of Theorem~\ref{T1} first, the proof of Lemma~\ref{l:t1prelim} can be found at the end of this section.

\begin{proof}[Proof of Theorem \ref{T1}]
	{\bf Proof of 1.:}\\
	Define
	\begin{align}\label{eq:decomposition}
	J_t = (2p-1)q\int_0^t h(I_s) ds, \text{ such that } I_t = \int_0^t \varphi(t-s)dJ_s.
	\end{align}
	Recall $J^N$ from \eqref{eq:JN}. For $\theta_N = \tfrac{1}{N}$, with $A^{N,i}, D^{N,i}$ as in Lemma~\ref{l:t1prelim} and $C^{N,i}_t = \frac 1N\sum_{j=1}^N U_jV_{ji} \int_0^t h(I_s^{N,j}) - h(I_s) ds$, we get that,
	\begin{align*}
	J_t^{N,i} - J_t & = A^{N,i} + C^{N,i} + D^{N,i},
	\end{align*}
	We first show that $\sup_{s\leq t}\frac 1N\sum_{j=1}^N (J_s^{N,i} - J_s)^2 \to 0$ almost surely. Observe that
	\begin{align}\label{eq:prev}
	\sup_{s\leq t} \frac 1N\sum_{i=1}^N \big(J_s^{N,i} - J_s\big)^2 & \leq \frac 3N\sum_{i=1}^N \Big(\sup_{s\leq t}(A^{N,i}_s)^2 + \sup_{s\leq t}(C^{N,i}_s)^2 + \sup_{s\leq t}(D^{N,i}_s)^2\Big)
	\end{align}
	By Lemma~\ref{l:t1prelim}.1 and Lemma~\ref{l:t1prelim}.2, respectively, we have $\sup_{i\leq N}\sup_{s\leq t}(A^{N,i}_s)^2 \to 0$ and $\sup_{i\leq N}\sup_{s\leq t}(D^{N,i}_s)^2 \to 0$ almost surely. For $C^{N,i}$ we obtain, using Jensen's inequality twice and $U_j^2=1$, $V_{ji}^2\leq 1$,
	\begin{equation}\label{eq:meanC}
	\begin{aligned}
	\sup_{s\leq t} \frac 1N\sum_{i=1}^N\Big(C^{N,i}_s\Big)^2 & \leq \sup_{s\leq t} \frac 1N\sum_{i=1}^N\frac 1N\sum_{j=1}^N \Big(\int_0^t h(I_s^{N,j}) - h(I_s) ds\Big)^2
	\\ & \leq t \frac 1N\sum_{j=1}^N \int_0^t \big(h(I_s^{N,j}) - h(I_s)\big)^2 ds
	\\ & \leq t h_{Lip}^2 \frac 1N\sum_{j=1}^N \int_0^t \big(I_s^{N,i} - I_s\big)^2 ds 
	\\ & \leq C t h_{Lip}^2 \int_0^t \sup_{u\leq s}\frac 1N\sum_{j=1}^N(J_u^{N,i} - J_u)^2 ds,
	\end{aligned}
	\end{equation}
	where we have used Lemma \ref{l:conv} in the last step. Combining the results on $A^{N,i}, C^{N,i}$ and $D^{N,i}$ in \eqref{eq:prev} we obtain
	\begin{align*}
	\sup_{s\leq t} \frac 1N\sum_{j=1}^N\big(J_s^{N,i} - J_s\big)^2 \leq 3Ct h_{lip}^2 \int_0^t \sup_{u\leq s}\frac 1N\sum_{j=1}^N\big(J_u^{N,i} - J_u\big)^2 ds + o(1),
	\end{align*}
	hence $\sup_{s\leq t} \frac 1N\sum_{i=1}^N (J_s^{N,i} - J_s)^2 \to 0$ by Gronwall's inequality. Now fix $i$ and repeat the estimation in \eqref{eq:meanC} to obtain 
	\begin{align*}
	\sup_{s\leq t} \Big(C^{N,i}_s\Big)^2 & \leq \sup_{s\leq t} \frac 1N\sum_{j=1}^N \Big(\int_0^s h(I_u^{N,j}) - h(I_u) ds\Big)^2 
	\\ & \leq C t h_{Lip}^2 \int_0^t \sup_{u\leq s}\frac 1N\sum_{j=1}^N(J_u^{N,j} - J_u)^2 ds \to 0
	\end{align*}
	almost surely. As the right hand side does not depend on $i$, this convergence is uniform in $i\leq N$. As we have already shown $\sup_{i\leq N}\sup_{s\leq t}(A^{N,i}_s)^2\to 0$ as well as $\sup_{i\leq N}\sup_{s\leq t}(D^{N,i}_s)^2\to 0$, we obtain 
	\begin{align*}
	\sup_{i\leq N}\sup_{s\leq t} \big(J_s^{N,i} - J_s\big)^2 \leq 3 \Big( \sup_{i\leq N}\sup_{s\leq t}(A^{N,i}_s)^2 + \sup_{i\leq N}\sup_{s\leq t}(C^{N,i}_s)^2 + \sup_{i\leq N}\sup_{s\leq t}(D^{N,i}_s)^2 \Big) \to 0
	\end{align*}
	almost surely. By Lemma \ref{l:conv}, we can conclude that $\sup_{i\leq N}\sup_{s\leq t} \big(I^{N,i}_s - I_s\big)^2 \to 0$ almost surely.\\
	
	\noindent
	{\bf Proof of 2.:}\\
	Define $\bar Z^i_t = Y_i\big( \int_0^t h(I_s)ds\big)$, where $Y_1,...,Y_N$ are independent Poisson process as in \eqref{eq:timechange0}. Fix $\omega\in\Omega$, such that \[\sup_{i\leq N}\sup_{0\leq s\leq t} | I_s^{N,i}(\omega) - I_s | \rightarrow 0.\]
	As $h$ is Lipschitz, $\int_0^t h(I_s^{N,i}(\omega))ds \rightarrow \int_0^t h(I_s)ds$, hence for any point of continuity of $t\mapsto \bar Z^i_t(\omega)$ we can conclude $Z^i_t(\omega) \rightarrow \bar Z^i_t(\omega)$. As the points of continuity are dense in $[0,\infty)$, convergence in Skorohod-distance follows from \cite[Theorem~2.15~c)(ii)]{jacod2013limit}.\\ 
	
	\noindent
	{\bf Proof of 3.:}\\
	First, strong existence and uniqueness of $\overline K$ follows from \cite{berger1980volterra}. For the convergence result, we proceed in three steps:\\
	
	\noindent
	{\bf Step 3.1: $\frac1N\sum_{i=1}^NU_i\sqrt{N}(I^{N,i}-I)\to (2p-1)\overline{K}$}\\[0.5em]
	Use $B^U$ from Lemma \ref{l:donsker} in the definition of $\overline{K}$ and define $\overline{K}^U:=(2p-1)\overline{K}$. Then $\overline{K}^U=\int_0^t\varphi(t-s)d\overline{G}^U_s$ with
	\begin{align*}
	\overline{G}^U=\int_0^t (2p-1)qW h(I_s) + (2p-1)qh'(I_s)\overline{K}^U_s ds + (2p-1)q \int_0^t \sqrt{h(I_s)} dB^U_s.
	\end{align*}
	We show $\frac1N\sum_{i=1}^NU_i\sqrt{N}(I^{N,i}-I)\to \overline{K}^U$ in probability, uniformly on compact time intervals. With $\mathcal A^{N}$, $\mathcal C^{N}$, $\mathcal D^{N}$, $\mathcal E^{N}$ as in Lemma~\ref{l:t1prelim} and 
	\[
	\mathcal F_t^{N} := (2p-1)q \int_0^t\Big(\frac1N\sum_{j=1}^NU_j\sqrt{N} (I_s^{N,j} - {I}_s)-\overline{K}^U_s\Big)h'(I_s)ds
	\]
	we can decompose
	\begin{align*}
	\frac1N\sum_{i=1}^NU_i\sqrt N (J^{N,i}_t-J_t) - \overline{G}^U_t = \mathcal A^{N}_t + \mathcal C^{N}_t+ \mathcal D^{N}_t+ \mathcal E^{N}_t + \mathcal F^{N}_t.  
	\end{align*}
	By Lemma~\ref{l:t1prelim}.6.-9., we have
	\[
	\mathbf{E}_{UV}\big[\sup_{0\leq s\leq t}\big(A^{N}_s\big)^2\big], \mathbf{E}_{UV}\big[\sup_{0\leq s\leq t}\big(C^{N}_s\big)^2\big], \mathbf{E}_{UV}\big[\sup_{0\leq s\leq t}\big(D^{N}_s\big)^2\big], \mathbf{E}_{UV}\big[\sup_{0\leq s\leq t}\big(E^{N}_s\big)^2\big]\to 0
	\]
	in probability. We now show 
	\[\mathbf{E}_{UV}\big[\sup_{0\leq s\leq t}\big(F^{N}_s\big)^2\big]\leq C\int_0^t\mathbf{E}_{UV}\big[\sup_{0\leq u\leq s}\big(\frac1N\sum_{i=1}^NU_i\sqrt N (J^{N,i}_u-J_u) - \overline{G}^U_u\big)^2\big]ds\]
	and deduce the desired convergence of $\frac1N\sum_{i=1}^NU_i\sqrt{N}(I^{N,i}-I)\to \overline{K}^U$ using Gronwall's inequality and Lemma~\ref{l:conv}. By Jensen's inequality and Lemma \ref{l:conv},
	\begin{align*}
	\mathbf{E}_{UV}\Big[\sup_{0\leq s\leq t}\big(F_s^{N}\big)^2\Big] & \leq C  \int_0^t \mathbf{E}_{UV}\Big[\sup_{0\leq u\leq s}\Big(\frac1N\sum_{j=1}^NU_j\sqrt{N} (I_u^{N,j} - {I}_u)-\overline{K}^U_u\Big)^2\Big]ds
	\\ &\leq C\int_0^t \mathbf{E}_{UV}\Big[\sup_{0\leq u\leq s}\Big(\frac1N\sum_{j=1}^NU_j\sqrt{N} (J_u^{N,j} - {J}_u)-\overline{G}^U_u\Big)^2\Big]ds.
	\end{align*}
	Hence we obtain 
	\begin{align*}
	\mathbf{E}_{UV}\Big[ \sup_{0\leq s\leq t}&\Big(\frac1N\sum_{i=1}^NU_i\sqrt N (J^{N,i}_s-J_s) - \overline{G}^U_s\Big)^2 \Big] \\
	&\leq o_p(1) + C\int_0^t \mathbf{E}_{UV}\Big[ \sup_{0\leq u\leq s}\Big(\frac1N\sum_{i=1}^NU_i\sqrt N (J^{N,i}_u-J_u) - \overline{G}^U_u\Big)^2\Big] ds,
	\end{align*}
	and by Gronwalls inequality
	\begin{align*}
	\mathbf{E}_{UV}\Big[ 1\wedge\sup_{0\leq s\leq t}&\Big(\frac1N\sum_{i=1}^NU_i\sqrt N (J^{N,i}_s-J_s) - \overline{G}^U_s\Big)^2 \Big] \to 0
	\end{align*}
	in probability. By bounded convergence, we have $\sup_{0\leq s\leq t}\Big(\frac1N\sum_{i=1}^NU_i\sqrt N (J^{N,i}_s-J_s) - \overline{G}^U_s\Big)^2\to 0$ in probability. \\[0.5em]
	
	\noindent
	{\bf Step 3.2: $\frac1N\sum_{i=1}^N\sqrt{N}(I^{N,i}-I)\to \overline{K}$ and $\frac1N\sum_{i=1}^NU_iV_{ik}\sqrt{N}(I^{N,i}-I)\to (2p-1)q\overline{K}$}\\[0.5em]
	From the uniform convergence $\frac1N\sum_{i=1}^NU_i\sqrt{N}(I^{N,i}-I)\to (2p-1)\overline{K}$ in probability we can easily deduce convergence of \[\frac1N\sum_{i=1}^N\sqrt{N}(I^{N,i}-I)\to \overline{K}\quad\text{ as well as }\quad\frac1N\sum_{i=1}^NU_iV_{ik}\sqrt{N}(I^{N,i}-I)\to (2p-1)q\overline{K}\]
	as follows: We can split $\frac1N\sum_{i=1}^N\sqrt{N}(I^{N,i}-I)\to \overline{K}$ in summands similar to $\mathcal A^N,...,\mathcal F^N$ from Step 3.1. Convergence of $\mathcal A^N,\mathcal C^N,\mathcal D^N,\mathcal E^N$ follows analogously to Lemma~\ref{l:t1prelim}, we simply use \eqref{eq:LLN1} instead of \eqref{eq:LLN2} for $\mathcal A^N,\mathcal E^N$ and \eqref{eq:CLT1} instead of \eqref{eq:CLT2} for $\mathcal D^N$. Convergence of $\mathcal F^N$ follows from the uniform convergence of \[\frac1N\sum_{i=1}^NU_i\sqrt{N}(I^{N,i}-I)\to (2p-1)\overline{K}\]
	from Step 3.1. For $\frac1N\sum_{i=1}^NU_iV_{ik}\sqrt{N}(I^{N,i}-I)$, use \eqref{eq:LLN3} instead of \eqref{eq:LLN2} and \eqref{eq:CLT3} instead of \eqref{eq:CLT2}.\\
	
	\noindent
	{\bf Step 3.3: $\sqrt{N}(I^{N,i}-I)\to {K^i}$}\\[0.5em]
	Use $\widetilde B^i$ from Lemma \ref{l:donsker} in the definition of $G^i$ and recall $\widetilde W^i$ from \ref{ss:weights}. With $\widetilde {\mathcal A}^{N,i}$, $\widetilde {\mathcal C}^{N,i}$ and $\widetilde {\mathcal D}^{N,i}$ from Lemma~\ref{l:t1prelim},
	\begin{align*}
	\sqrt{N}(J^{N,i}-J)-G^i &= \widetilde {\mathcal A}^{N,i}_t + \widetilde {\mathcal C}^{N,i}_t+ \widetilde {\mathcal D}^{N,i}_t+ \widetilde {\mathcal E}^{N,i}_t  
	\intertext{where}
	\widetilde {\mathcal E}_t^{N,i} & =  \int_0^t \Big(\Big(\frac 1N \sum_{j=1}^N U_jV_{ji}\sqrt{N} (I_s^{N,j} - {I}_s)\Big)-(2p-1)q\overline K\Big) h'(I_s)ds.
	\end{align*}
	With Lemma~\ref{l:t1prelim}.10-12 and step 3.2, we obtain $\sup_{0\leq s\leq t}\big(\sqrt{N}(J_s^{N,i}-J_s)\to {G_s^i}\big)^2\to0$ in probability. By Lemma \ref{l:conv}, $\sup_{0\leq s\leq t}\big(\sqrt{N}(I_s^{N,i}-I_s)\to {K_s^i}\big)^2\to0$ in probability. 
\end{proof}

\begin{proof}[Proof of Corollary \ref{cor1}]
	First, \eqref{eq:cor11} follows from the convergence of $\frac 1{\sqrt{N}}X^{N,0}$ in Lemma \ref{l:donsker}. For \eqref{eq:cor12}, by the law of large numbers, 
	\[\frac 1N \sum_{j=1}^N \bar Z^j_T \xrightarrow{N\to\infty} \int_0^T h(I_t) dt,\quad \text{as well as}\quad \frac 1N\sum_{i=1}^N h(I^{N,i}) - h(I)\xrightarrow{N\to\infty} 0\]
	almost surely, uniformly on compact time intervals by Theorem~\ref{T1}.1. Therefore
	\begin{align*}
	\frac 1N \sum_{j=1}^N\Big( Z^{N,j} - \bar Z^j\Big) & = 
	\frac 1N \sum_{j=1}^N\Big( Z^{N,j} - \int_0^. h(I_s^{N,i}) ds\Big) \\ & \qquad - 
	\frac 1N \sum_{j=1}^N\Big( \bar Z^{j} - \int_0^. h(I_s)ds\Big)
	+ \int_0^. \frac 1N\sum_{i=1}^N h(I^{N,i}) - h(I) ds \xrightarrow{N\to\infty} 0,
	\end{align*}
	almost surely. Next, \eqref{eq:cor13} follows from the reformulation \eqref{eq:timechange0}, Lemma \ref{l:donsker} and Theorem~\ref{T1}.1. For \eqref{eq:cor14}, with $K^i$ as in Theorem~\ref{T1}.3,
	\begin{align*}
	\sqrt N (h(I_t^{N,i}) - h(I_t)) =\sqrt N h'(I_t) (I_t^{N,i} - I_t) + o(1) = h'(I_t) K^i_t +  o(1).
	\end{align*}
	Last, for \eqref{eq:cor15}, recall $B, B^0$ from section \ref{ss:reformulation}. Use $B$ in the definition of $\overline K$, then $\mathbf E[B_t B^0_t] = (2p-1)qt, t\geq 0$ and
	\begin{align*}
	\frac{1}{\sqrt N} \sum_{j=1}^N \big(h(I^{N,j})-h(I)\big) = h'(I_s) \frac 1N\sum_{j=1}^N \sqrt{N}\big(I^{N,j}-I\big)+o(1)\xRightarrow{N\to\infty} h'(I_s)\overline K_s,
	\end{align*}
	by Theorem~\ref{T1}.3, whence
	\begin{align*}
	\frac{1}{\sqrt N} \sum_{j=1}^N \Big(Z^{N,j} - \int_0^t h(I_s)ds\Big) &= \frac{1}{\sqrt N} \sum_{j=1}^N \Big(Z^{N,j} - \int_0^t h(I^{N,j}_s)ds\Big) + \int_0^t\frac{1}{\sqrt N} \sum_{j=1}^N \big(h(I^{N,j})-h(I)\big)ds\\
	&\xRightarrow{N\to\infty} \int_0^t \sqrt{h(I_s)}dB^0_s + \int_0^t h'(I_s)\overline K_s ds 
	\end{align*}
	by \eqref{eq:cor13}.
\end{proof}

\begin{proof}[Proof of Lemma \ref{l:t1prelim}]
	{\bf Proof of 1.:} This is exactly the statement of Lemma~\ref{l:poi2}.\\[0.5em]
	{\bf Proof of 2.:} Follows from boundedness of $h$ and \eqref{eq:LLN2}.\\[0.5em]
	{\bf Proof of 3.:} By Lemma 4.51 and Proposition 4.50 b) in \cite{jacod2013limit}, the predictable quadratic variation of $\sqrt{N}A^{N,i}$ is given by $\frac 1N\sum_{i=1}^NV_{ji}\int_0^t h(I_s^{N,j})ds$. By Doob's inequality, the assumed uniform convergence of $I^{N,i}$ and boundedness of $h$,
	\begin{align*}
	\mathbf{E}_{UV}\Big[ \sup_{0\leq s\leq t}\big(\sqrt{N}A^{N,i}_s\big)^2 \Big]\leq C \mathbf{E}_{UV}\Big[ \langle \sqrt{N}A^{N,i}\rangle_t \Big] = \frac 1N\sum_{i=1}^NV_{ji}\int_0^t\mathbf{E}_{UV}\Big[h(I_s^{N,j})\Big]ds \to q\int_0^th(I_s)ds.
	\end{align*}\\[0.5em]
	{\bf Proof of 4.:} Recall $W^{N,j}$ from section \ref{ss:weights}. By boundedness of $h$, we have
	\begin{align*}
	\mathbf{E}_{UV}\Big[ \frac1N\sum_{j=1}^N\sup_{0\leq s\leq t}\big(\sqrt{N}D^{N,i}_s\big)^2 \Big] &\leq C \frac1N\sum_{j=1}^N\big(W^{N,j}\big)^2 
	\end{align*}
	which converges in probability by \eqref{eq:squaremix}.\\[0.5em]
	{\bf Proof of 5.:} First write
	\begin{align*}
	\sqrt N (J^{N,i}_s-J_s) &= \sqrt{N}A^{N,i}_t + \widetilde {\mathcal C}^{N,i}_t+ \sqrt{N}D^{N,i}_t+ \frac 1N \sum_{j=1}^N U_jV_{ji} \int_0^t \Big(\sqrt{N} (I_s^{N,j} - {I}_s)\Big) h'(I_s)ds.
	\end{align*}
	By 3., 4. and 11., $\sup_{i\leq N}\big(\sqrt{N}A^{N,i}_t + \widetilde {\mathcal C}^{N,i}_t+ \sqrt{N}D^{N,i}_t\big)$ is bounded, and for the last term we can compute
	\begin{align*}
	\mathbf{E}_{UV}\Big[ \sup_{0\leq s\leq t}\big(\frac 1N \sum_{j=1}^N U_jV_{ji} \int_0^s \Big(\sqrt{N} (I_u^{N,j} - {I}_u)\Big) &h'(I_u)du\big)^2 \Big] 
	\\&\leq C \int_0^t\mathbf{E}_{UV}\Big[\frac1N\sum_{j=1}^N\sup_{0\leq u\leq s}\big(\sqrt{N} (I_u^{N,j} - {I}_u)\big)^2\Big] ds.  
	\end{align*}
	Using Lemma~\ref{l:conv}, we obtain
	\begin{align*}
	\frac1N\sum_{j=1}^N\mathbf{E}_{UV}\big[\sup_{0\leq s\leq t}\big(\sqrt{N} (I_s^{N,j} - {I}_s)\big)^2\big] & \leq \mathcal O(1) + C \int_0^t\frac1N\sum_{j=1}^N\mathbf{E}_{UV}\Big[\sup_{0\leq u\leq s}\big(\sqrt{N} (I_u^{N,j} - {I}_u)\big)^2\Big] ds.
	\end{align*}
	The desired boundedness follows from Gronwall's inequality.\\[0.5em]
	{\bf Proof of 6.:} First,
	\begin{align*}
	\mathcal A^{N}_s  &= \frac 1{\sqrt{N}}\sum_{j=1}^N U_j\Big[\frac1N\sum_{i=1}^NU_iV_{ji}-(2p-1)q\Big]\Big(Y_j\Big(\int_0^t
	h(I_s^{N,j})ds\Big) - \int_0^t h(I_s^{N,j}) ds \Big)
	\\ &\qquad + (2p-1)q\frac 1{\sqrt{N}}\sum_{j=1}^N U_j\Big(Y_j\Big(\int_0^t
	h(I_s^{N,j})ds\Big) - \int_0^t h(I_s^{N,j}) ds \Big)- \int_0^t \sqrt{h(I_s)} dB^U_s.
	\end{align*}
	The predictable quadratic variation of the first line is given by 
	\[
	\frac 1N\sum_{j=1}^N \big[\frac1N\sum_{i=1}^NU_iV_{ji}-(2p-1)q\big]^2\int_0^t
	h(I_s^{N,j})ds
	\]
	which can be seen by Lemma 4.51 and Proposition 4.50 b) in \cite{jacod2013limit}, and converges to $0$ almost surely by the assumed uniform convergence of $I^{N,i}$ and \eqref{eq:LLN2}. The second line converges to $0$ by Lemma \ref{l:donsker}. Using Doob's inequality, we obtain
	\begin{align*}
	\mathbf{E}&_{UV}\Big[\sup_{0\leq s\leq t}\big(\mathcal A^{N}_s\big)^2\Big] \\& \leq C\,\mathbf{E}_{UV}\Big[\Big(\frac 1{\sqrt{N}}\sum_{j=1}^N U_j\Big[\frac1N\sum_{i=1}^NU_iV_{ji}-(2p-1)q\Big]\Big(Y_j\Big(\int_0^t
	h(I_s^{N,j})ds\Big) - \int_0^t h(I_s^{N,j}) ds \Big)\Big)^2\Big]
	\\ &\quad+ C\,\mathbf{E}_{UV}\Big[\Big(\frac 1{\sqrt{N}}\sum_{j=1}^N U_j\Big(Y_j\Big(\int_0^t
	h(I_s^{N,j})ds\Big) - \int_0^t h(I_s^{N,j}) ds \Big)- \int_0^t \sqrt{h(I_s)} dB^U_s\Big)^2\Big] \to 0
	\end{align*} 
	almost surely.\\[0.5em]
	{\bf Proof of 7.:} Note that $\mathcal C^{N,i} = \frac1N\sum_{i=1}^NU_i\widetilde {\mathcal C}^{N,i}$, whence $\big(\mathcal C^{N,i}\big)^2 \leq \frac1N\sum_{i=1}^N\big(\widetilde {\mathcal C}^{N,i}\big)^2$, and the result follows from 11.\\[0.5em]
	{\bf Proof of 8.:} Follows from boundedness of $h$ and \eqref{eq:CLT2}. \\[0.5em]
	{\bf Proof of 9.:} By 5. and the uniform convergence in \eqref{eq:LLN2},
	\begin{align}\label{eq:EN}
	\mathbf{E}_{UV}\Big[\sup_{0\leq s\leq t}\big(\mathcal E_s^{N}\big)^2\Big] & \leq C \sup_{j\leq N}\Big[\frac 1N \sum_{i=1}^N \big(U_iV_{ji}-(2p-1)q\big)\Big]^2 \int_0^t\frac1N\sum_{j=1}^N\mathbf{E}_{UV}\Big[\big(\sqrt{N} (I_s^{N,j} - {I}_s)\big)^2\Big]ds
	\end{align}
	converges to 0 in probability. \\[0.5em]
	{\bf Proof of 10.:} This is exactly the statement in \eqref{eq:brownian}, which is a consequence of Lemma~\ref{l:donsker}. \\[0.5em]
	{\bf Proof of 11.:} Write $R_1h$ for the first order remainder in Taylor's expansion of the function $h$. We use $U_j^2=1, V_{ji}^2\leq 1$, Jensen inequality, and some $\xi^{N,j}_s$ between $I^{N,j}_s$ and $I_s$
	\begin{align*}
	\mathbf{E}_{UV}\Big[\sup_{0\leq s\leq t}\big(\widetilde {\mathcal C}_s^{N,i}\big)^2\Big] & \leq \frac 1N \sum_{j=1}^N \int_0^t \mathbf{E}_{UV}\Big[\big(\sqrt N R_1h(I^{N,j}_s,I_s)\big)^2 ds\Big]\\
	&\leq t \frac 1N \sum_{j=1}^N\int_0^t N \mathbf{E}_{UV}\Big[(h'(\xi^{N,j}_s) - h'(I_s))^2 (I^{N,j}_s-I_s)^2\Big] ds \\
	&\leq t(h'_{Lip})^2\int_0^t \sum_{j=1}^N\mathbf{E}_{UV}\Big[ (I^{N,j}_s-I_s)^4\Big] ds.
	\end{align*}
    We obtain convergence to zero almost surely of the last term similarly to the Proof of Theorem 1.1: Recall the decomposition of $J^{N,i}-J=A^{N,i}+C^{N,i}+D^{N,i}$ from \eqref{eq:decomposition}. As the fourth centered moment of a Poisson distribution with parameter $\lambda$ is of order $\lambda^2$ and $h$ is bounded, we obtain $\sum_{j=1}^N\mathbf{E}_{UV}\big[ (A^{N,i})^4\big]=O(N^{-1})$ similarly to the proof of Lemma \ref{l:poi2}. 
     For $D^{N,i}$ rewrite
	\begin{align*}
	\sum_{i=1}^N\Big( \frac 1N\sum_{j=1}^N \big( U_jV_{ji} - (2p-1)q \big) \Big)^4 \leq \sup_{i\leq N}\Big( \frac 1N\sum_{j=1}^N \big( U_jV_{ji} - (2p-1)q \big) \Big)^2\frac 1N \sum_{j=1}^N \big(W^{N,i}\big)^2 \to 0
	\end{align*}
	in probability, by  uniform convergence in \eqref{eq:LLN2} and \eqref{eq:squaremix}. 
	We obtain, using $U_j^4,V_{ji}^4\leq 1$, Jensen's inequality and Lemma \eqref{l:conv},
	\begin{align*}
	\sum_{i=1}^N\mathbf{E}_{UV}\Big[ \sup_{s\leq t}(J^{N,i}_s-J_s)^4\Big] &\leq \sum_{i=1}^N\mathbf{E}_{UV}\Big[\sup_{s\leq t}\Big(\frac 1N\sum_{j=1}^N U_jV_{ji} \int_0^s h(I_u^{N,j}) - h(I_u) du\Big)^4\Big]+ o_p(1)
	\\ &\leq C \int_0^t\sum_{j=1}^N\mathbf{E}_{UV}\Big[ \sup_{u\leq s}(J^{N,j}_u-J_u)^4\Big]ds + o_p(1).
	\end{align*}
	Then $\sum_{j=1}^N\mathbf{E}_{UV}\Big[ \sup_{s\leq t}(I^{N,j}_s-I_s)^4\Big]\to 0$ in probability follows from Gronwall's inequality and again Lemma \eqref{l:conv}.\\[0.5em]
	{\bf Proof of 12.:} Follows from boundedness of $h$ and \eqref{eq:CLT}.
\end{proof} 
	
	\printbibliography
	
\end{document}